\newtheorem{theorem}{Theorem}[section]
\newtheorem{proposition}[theorem]{Proposition}
\newtheorem{lemma}[theorem]{Lemma}
\newtheorem{corollary}[theorem]{Corollary}
\theoremstyle{definition}
\newtheorem{remark}[theorem]{Remark}
\numberwithin{equation}{section}
\newcommand{\A}{{\mathscr A}}
\newcommand{\C}{{\mathbb C}}
\newcommand{\G}{{\mathscr G}}
\renewcommand{\H}{{\mathscr H}}
\renewcommand{\O}{{\mathscr O}}
\newcommand{\R}{{\mathbb R}}
\newcommand{\Aut}{{\operatorname{Aut\,}}}
\newcommand{\inv}{{^{-1}}}
\renewcommand{\phi}{\varphi}
\newcommand{\id}{\mathrm{id}}
\newcommand{\GL}{\mathrm{GL}}
\newcommand{\Iso}{{\operatorname{Iso}}}
\newcommand{\git}{/\!\!/}
\newcommand{\Q}{{\mathscr Q}}
\title{An equivariant parametric Oka principle \\ for bundles of homogeneous spaces}
\author{Frank Kutzschebauch, Finnur L\'arusson, Gerald W.~Schwarz}
\address{Frank Kutzschebauch, Institute of Mathematics, University of Bern, Sidlerstrasse 5, CH-3012 Bern, Switzerland}
\email{frank.kutzschebauch@math.unibe.ch}
\address{Finnur L\'arusson, School of Mathematical Sciences, University of Adelaide, Adelaide SA 5005, Australia}
\email{finnur.larusson@adelaide.edu.au}
\address{Gerald W.~Schwarz, Department of Mathematics, Brandeis University, Waltham MA 02454-9110, USA}
\email{schwarz@brandeis.edu}
\dedicatory{Dedicated to Peter Heinzner on the occasion of his sixtieth birthday}
\subjclass[2010]{Primary 32M05.  Secondary 14L24, 14L30, 32E10, 32Q28.}
\keywords{Oka principle, Stein manifold, Oka manifold, complex Lie group, reductive group, homogeneous space, group bundle, principal bundle}
\date{20 December 2016}
\thanks{F.~Kutzschebauch was partially supported by Schweizerischer Nationalfond grant 200021-140235/1.  F.~L\'arusson was partially supported by Australian Research Council grant DP150103442.  Much of the work on this paper was done at the Centre for Advanced Study at the Norwegian Academy of Science and Letters.  The authors would like to warmly thank the Centre for hospitality and financial support.  F.~Kutzschebauch and G.~W.~Schwarz would also like to thank the University of Adelaide for hospitality and the Australian Research Council for financial support.}
\begin{document}

\begin{abstract}    
We prove a parametric Oka principle for equivariant sections of a holomorphic fibre bundle $E$ with a structure group bundle $\G$ on a reduced Stein space $X$, such that the fibre of $E$ is a homogeneous space of the fibre of $\G$, with the complexification $K^\C$ of a compact real Lie group $K$ acting on $X$, $\G$, and $E$.  Our main result is that the inclusion of the space of $K^\C$-equivariant holomorphic sections of $E$ over $X$ into the space of $K$-equivariant continuous sections is a weak homotopy equivalence.  The result has a wide scope; we describe several diverse special cases.  We use the result to strengthen Heinzner and Kutzschebauch's classification of equivariant principal bundles, and to strengthen an Oka principle for equivariant isomorphisms proved by us in a previous paper.
\end{abstract}

\maketitle
\tableofcontents

\section{Introduction}  \label{sec:introduction}

\noindent
Let $X$ and $E$ be complex spaces and $E\to X$ be a holomorphically locally trivial fibre bundle with fibre $F$ (in this paper, all complex spaces are assumed to be reduced).  Take the structure group of $E$ to be a complex Lie subgroup $G$ of the biholomorphism group of $F$, so $E$ is defined by a holomorphic cocycle with respect to some open cover of $X$ with values in $G$.  We call $E$ a holomorphic $G$-bundle.  The same cocycle defines a holomorphic principal $G$-bundle $P$ such that $E$ may be identified with the twisted product $P\times^G F$, where we take $G$ to act on $P$ from the right and on $F$ from the left.

Let $L$ be a complex Lie group.  \textbf{Our first main goal} is to prove a parametric Oka principle for equivariant sections of $E$, when $L$ acts holomorphically on $X$ and $E$, so that the projection $E\to X$ is equivariant.  For this to be possible, we need to add four assumptions to the very general hypotheses that we have stated so far:
\begin{enumerate}
\item  $X$ is Stein.  This is a necessary assumption in Oka theory.
\item  $G$ acts transitively on $F$ (so $F$ is a complex homogeneous space, in particular smooth).  This assumption strengthens the relationship between $E$ and $P$.  Namely, $E$ may be identified with the quotient bundle $P/H$, where $H$ is a closed complex Lie subgroup of $G$, such that $F$ is biholomorphic to the quotient $G/H$ of left cosets.
\item  $L$ is reductive, that is, $L$ is the complexification $K^\C$ of a compact real Lie group $K$.  This assumption ensures a good structure theory for holomorphic actions of $L$ on $X$, similar to the structure theory of continuous actions of $K$.
\item  We need to suitably restrict the way that $L$ acts on $E$, so as to take the $G$-structure into account.
\end{enumerate}

Assumption (4) requires some discussion.  An action of a group on an object is a homomorphism from the group into the automorphism group of the object.  We want to allow the largest possible group of automorphisms of $E$, so as to have the strongest possible theorem.  It is important to be able to use the theory of principal bundles, so we want the action on $E$ to lift to an action on $P$.  Thus we ask:  What is the biggest group of automorphisms of $P$ that we can reasonably work with?

The simplest and most commonly studied automorphisms of $P\overset\pi\to X$ are the $G$-bundle maps, that is, pairs of maps $\phi:P\to P$ and $f:X\to X$ such that $f\circ\pi=\pi\circ\phi$ and $\phi$ is $G$-equivariant.  So we could have a left action of $L$ on $P$ commuting with the right action of $G$.  Such an action descends to $E$.  A more general action is obtained by twisting by a homomorphism from $L$ into the automorphism group of $G$.  Such an action descends to $E$ if the automorphisms of $G$ in the image of $L$ preserve $H$.  

An even more general action is obtained by considering $P$ to have the trivial group bundle $\G=X\times G\to X$ as its structure group bundle and twisting by an action of $L$ on $\G$.  More precisely, we choose a complex Lie subgroup $A$ of the Lie automorphism group of $G$ as the structure group of $\G$ and allow $L$ to act on $\G$ by automorphisms of $\G$ that act on the fibres of $\G$ by elements of $A$.  Now the allowable automorphisms of $P$ consist of compatible maps $\phi:P\to P$ and $f:X\to X$ such that for all $x\in X$, $p\in P_x$, and $g\in G$,
\[ \phi(p\cdot g)=\phi(p)\cdot\alpha_x(g) \in P_{f(x)}, \]
where $\alpha_x:G_x\to G_{f(x)}$ is a group isomorphism obtained by restricting an automorphism of $\G$.  In other words, the right action map $P\times G=P\times_X\G\to P$ is $L$-equivariant.  The action descends to $E$ if $A$ preserves $H$.  

We want to allow such actions, so we might as well take $P$ to be a generalised principal bundle with an arbitrary structure group bundle $\G$.  Another reason to do so is that \textbf{our second main goal} is to strengthen the classification theorem for principal $\G$-bundles with an $L$-action due to Heinzner and Kutzschebauch \cite{Heinzner-Kutzschebauch}.  Generalised principal bundles arise naturally, even if one is only interested in ordinary principal bundles.  If $P_1$ and $P_2$ are ordinary principal $G$-bundles, then the automorphism bundle $\Aut P_2$ is a group bundle with fibre $G$, typically nontrivial, and the bundle of isomorphisms $\Iso(P_1,P_2)$ is naturally viewed as a principal bundle with structure group bundle $\Aut P_2$.  Therefore, if we are interested in the classification theory of ordinary principal bundles, we need to study sections of generalised principal bundles, so we might as well work in the context of generalised principal bundles from the outset.

Our setting, then, is as follows.  Let $X$ be a reduced Stein space.  Let $G$ be a complex Lie group and $\G$ be a holomorphic group bundle on $X$ with fibre $G$.  By definition, $\G$ is defined by a holomorphic cocycle with respect to some open cover of $X$ with values in a complex Lie subgroup $A$ of the Lie automorphism group of $G$.  We call $A$ the structure group of $\G$ and we call $\G$ a holomorphic group $A$-bundle.  Let $\H$ be a holomorphic group subbundle of $\G$, whose fibre is a closed subgroup $H$ of $G$, so $\G$ may in fact be defined by a holomorphic cocycle with values in the group of Lie automorphisms of $G$ that preserve $H$.  Thus we assume that $A$ preserves $H$.

Let $P$ be a holomorphic principal bundle on $X$ with structure group bundle $\G$ acting from the right---we call $P$ a principal $\G$-bundle---and let $E$ be the quotient bundle $P/\H$.  Then $E$ is a holomorphic fibre bundle on $X$ with fibre $G/H$ (left cosets) and structure group bundle $\G$ acting on the fibre from the left.  Each fibre of $\G$ acts transitively on the fibre of $E$.  We call $E$ a homogeneous $\G$-bundle.  The principal bundle $P$ is defined by a holomorphic $\G$-valued cocycle, which tells us how to form $P$ by glueing together pieces of $\G$ over an open cover of $X$.  The same cocycle encodes how $E$ may be constructed from the quotient bundle $\G/\H$ (left cosets).  Note that the action of $\G$ on $P$ need not descend to an action on $E$ (right multiplication does not respect left cosets).

Recall that complexification defines a bijection from compact real Lie groups to reductive complex Lie groups.  Let $K$ be a compact real Lie group with complexification $K^\C$.  Let $K^\C$ act holomorphically on $X$, and holomorphically and compatibly on $\G$ by group $A$-bundle maps (which preserve $\H$).  This means that $K^\C$ acts on the fibres of $\G$ by elements of $A$, which makes sense because each fibre of $\G$ is canonically identified with $G$ modulo $A$.  Let $K^\C$ also act holomorphically and compatibly on $P$ such that the action map $P\times_X \G\to P$ is $K^\C$-equivariant.  We call $P$ with such an action a principal $K^\C$-$\G$-bundle.  The action of $K^\C$ on $P$ descends to an action on $E$.  We summarise all the above data by referring to $E$ as a homogeneous $K^\C$-$\G$-bundle.  

Viewed as a holomorphic fibre bundle with fibre $G$, the bundle $P$ can be taken to have structure group $A\ltimes G$.  Equivariance of the action map $P\times_X \G\to P$ is equivalent to $K^\C$ acting on $P$ by $A\ltimes G$-bundle maps, meaning that $K^\C$ acts on the fibres of $P$ by elements of $A\ltimes G$.  If $P'$ is another holomorphic principal $K^\C$-$\G$-bundle, then the holomorphic group bundle $\Aut P$ with fibre $G$ and the holomorphic principal bundle $\Iso(P',P)$ with fibre $G$ and structure group bundle $\Aut P$ have induced structure groups that are complex Lie groups and they have induced $K^\C$-actions by elements of the respective structure group that make the action map $\Iso(P',P)\times_X \Aut P\to\Iso(P',P)$ equivariant.

The following equivariant parametric Oka principle is our main result.

\begin{theorem}   \label{thm:main.result}
Let $E$ be a homogeneous holomorphic $K^\C$-$\G$-bundle on a reduced Stein space $X$, where $K$ is a compact real Lie group and $\G$ is a holomorphic group $K^\C$-bundle on $X$.  Then the inclusion of the space of $K^\C$-equivariant holomorphic sections of $E$ over $X$ into the space of $K$-equivariant continuous sections is a weak homotopy equivalence.
\end{theorem}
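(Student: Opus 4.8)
The plan is to reduce the equivariant statement to the non-equivariant parametric Oka principle for sections of homogeneous fibre bundles, applied to a suitable quotient space. The key structural idea is Heinzner's slice theorem for holomorphic actions of a reductive group $K^\C$ on a Stein space: $X$ admits a (reduced Stein) categorical quotient $X\git K^\C$, and locally over the quotient $X$ looks like $K^\C\times^{K^\C_x} S_x$ for a slice $S_x$ at a point $x$ with closed orbit. The reductive fibre groups that appear are the complexifications of the compact isotropy groups, and continuity in the $K$-category matches holomorphy in the $K^\C$-category precisely because $K^\C$ is reductive. So the first step is to set up the correct notion of ``equivariant section over $X$'' as an ordinary section over the quotient of an associated bundle, and to identify the relevant Oka-theoretic objects downstairs.

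Concretely, I would proceed as follows. \textbf{Step 1.} Using that $\G$ is a group $K^\C$-bundle and $P$ is a principal $K^\C$-$\G$-bundle, form the quotients $\G\git K^\C$, $P\git K^\C$ and $E\git K^\C=(P/\H)\git K^\C$ over $Y=X\git K^\C$. The point is that a $K^\C$-equivariant holomorphic section of $E\to X$ corresponds bijectively to an honest holomorphic section of $E\git K^\C\to Y$ (and similarly $K$-equivariant continuous sections correspond to continuous sections downstairs); this is where the reductivity of $K^\C$ and the structure theory alluded to in assumption (3) does its work. \textbf{Step 2.} Show that $Y$ is a reduced Stein space and that, at least locally over $Y$, the quotient $E\git K^\C\to Y$ is a stratified fibre bundle whose fibres over the stratum corresponding to isotropy type $(L)$ are homogeneous spaces of the reductive group $(\text{centralizer data})$; the fibre is of the form $(G/H)$-type data twisted by the slice representation, and crucially each fibre is still an Oka manifold (homogeneous spaces of complex Lie groups are Oka, and this is stable under the operations involved). \textbf{Step 3.} Apply the parametric Oka principle for sections of fibre bundles with Oka fibres over a Stein base --- in the stratified form, proved by induction on the strata of $Y$ using the gluing/Runge techniques of Oka theory --- to conclude that holomorphic sections of $E\git K^\C\to Y$ include as a weak homotopy equivalence into continuous sections. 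Then translate back upstairs via Step 1.

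The delicate points are the following. First, the quotient $E\git K^\C$ is genuinely singular and its map to $Y$ is not a locally trivial fibre bundle but only a stratified one, with the stratification by orbit type; so the ``Oka principle'' invoked in Step 3 is not the textbook Gromov/Forstneri\v c statement but a stratified version, which must be assembled by downward induction over the strata, handling the passage from an open stratum to its closure by an equivariant Runge-type approximation and a homotopy-lifting argument near the smaller strata. Second, one must check that the identification in Step 1 is compatible with the topology on section spaces (compact-open topology) so that ``weak homotopy equivalence'' is preserved; this requires that the parametrising spheres and disks can be handled equivariantly, i.e. that the parametric statement upstairs reduces to the parametric statement downstairs with parameters. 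Third, one needs the slices to be chosen holomorphically and the associated-bundle descriptions to be functorial enough that the group bundle $\G$, the subbundle $\H$, and the principal bundle $P$ all descend compatibly; this is essentially bookkeeping with Heinzner's slice theorem but is the technical heart of making the reduction rigorous.

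I expect \textbf{the main obstacle} to be Step 3: proving the parametric Oka principle for sections of the stratified bundle $E\git K^\C\to Y$. The non-stratified case is classical, but here the base $Y$ is stratified Stein and the ``fibre'' jumps across strata, so one needs a careful induction that at each stage extends sections (and homotopies of sections, with sphere parameters) from a neighbourhood of the union of smaller strata, using that each stratum's fibre bundle has Oka fibres and that the relevant cohomological/approximation obstructions vanish over Stein bases. Equivalently, one can try to lift everything back to $X$ and run an equivariant version of the Forstneri\v c--Prezelj convex approximation scheme directly, using equivariant Stein neighbourhoods and the local model $K^\C\times^{K^\C_x}S_x$; either way, controlling the behaviour near the lower-dimensional orbit-type strata is the crux.
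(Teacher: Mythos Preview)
Your descent-to-the-quotient strategy is natural but has genuine gaps that make it unworkable as stated, and the paper proceeds along entirely different lines.

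\textbf{The gap in Step 1.}  The categorical quotient $E\git K^\C$ need not exist in any useful sense: $E$ is typically not Stein (its fibre $G/H$ can be, say, a flag manifold), so Heinzner's quotient theory does not apply to $E$.  Even if you work locally over $Y=X\git K^\C$ using the slice model $K^\C\times^{L^\C}S$, what you get is not a section of a quotient bundle but rather an $L^\C$-equivariant section over the slice $S$, and the ``fibre'' of the putative bundle downstairs at a point of isotropy type $(L)$ is the fixed-point set $(G/H)^{L^\C}$ of a possibly non-inner $L^\C$-action on $G/H$.  Such fixed-point sets are not in general homogeneous, smooth, connected, or Oka; your claim in Step 2 that ``each fibre is still an Oka manifold'' is unjustified and, in general, false.

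\textbf{The gap in Step 3.}  You correctly flag the stratified parametric Oka principle as the crux, but no such result is available, and the difficulty you describe (jumping fibres across strata) is precisely the obstacle that the paper's method is designed to circumvent rather than confront.

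\textbf{What the paper actually does.}  The paper never forms a quotient bundle.  Instead it exploits the transitive left action of the holomorphic group bundle $\Aut P$ on $E$: any two sections of $E$ differ, locally over $Y$, by a section of $\Aut P$ (or of the sub-group-bundle $\A\subset\Aut P$ stabilising a fixed holomorphic section).  The surjection on $\pi_0$ comes from Heinzner--Kutzschebauch's classification of principal $K^\C$-$\G$-bundles (pulling back the $\H$-reduction of $P$ determined by a continuous section, making it holomorphic, and deforming the resulting isomorphism).  For the higher homotopy groups, the paper works with the Kempf-Ness set $R\subset X$ (a $K$-equivariant strong deformation retract) and introduces mixed ``$NHC$-sections'' of $\Aut P$: continuous over $R$, holomorphic over $X$ on the boundary sphere of the parameter ball, identity at the basepoint.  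The existence of such sections locally over $Y$ is the technical heart (the ``Claim''), established via equivariant local triviality and parametrised box coordinates; they are then glued globally using the vanishing of $H^1$ of the sheaf $\Q(R)$ for $\A$, and deformed to the identity using path-connectedness of $\Q(R)(Y)$ for $\Aut P$---both of these being the main technical results of Heinzner--Kutzschebauch.  The passage between sections over $R$ and over $X$ is handled by a homotopy-invariance theorem for principal $K$-$\G$-bundles, not by a stratified Oka principle.

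In short, the paper replaces your stratified-fibre problem by a group-bundle cohomology problem on $Y$, for which the needed vanishing and connectedness statements are already in the literature.
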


The spaces of sections are endowed with the compact-open topology.

Let us mention three special cases of the theorem.  First, the theorem holds when $E$ is a holomorphic principal $K^\C$-$\G$-bundle.  This is in fact rather easy to prove from known results (see Remark \ref{rem:simplification}); the general case of a homogeneous bundle requires much more work.  

Next, we state the special case when the source and the target are \lq\lq uncoupled\rq\rq.  This is a parametric Oka principle for equivariant maps from a Stein $K^\C$-space to a complex homogeneous space $G/H$ with a holomorphic $K^\C$-action of a fairly general kind.  Again, let the complexification $K^\C$ of a compact real Lie group $K$ act holomorphically on a reduced Stein space $X$.  Let $G$ be a complex Lie group and $H$ be a closed complex subgroup of $G$.  Let $K^\C$ act holomorphically on $G$ in two ways: by Lie group automorphisms that preserve $H$ (call $G$ with this action $G_1$), and by biholomorphisms (call $G$ with this action $G_2$), such that the multiplication map $G_2\times G_1\to G_2$ is equivariant.  Then the second action descends to $G/H$ (left cosets).  Equivariance of the multiplication map implies that each element of $K^\C$ acts on $G_2$ by a Lie group automorphism of $G$ that preserves $H$, followed by left multiplication by an element of~$G$.

\begin{corollary}   \label{cor:uncoupled}
The inclusion of the space of $K^\C$-equivariant holomorphic maps $X\to G/H$ into the space of $K$-equivariant continuous maps is a weak homotopy equivalence.
\end{corollary}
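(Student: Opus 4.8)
The plan is to deduce Corollary~\ref{cor:uncoupled} from Theorem~\ref{thm:main.result} by realising the space of equivariant maps $X\to G/H$ as the space of equivariant sections of a homogeneous $K^\C$-$\G$-bundle whose underlying fibre bundle is trivial but whose $K^\C$-action is not.  Write $k\cdot_1 g$ and $k\cdot_2 g$ for the two actions of $k\in K^\C$ on $g\in G$ (giving $G_1$ and $G_2$), and recall from the discussion before the statement that $\cdot_1$ preserves $H$ and that $\cdot_2$ descends to the action $k\bullet(gH)=(k\cdot_2 g)H$ on $G/H$.

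First I would fix the structure group: the homomorphism $K^\C\to\Aut G$, $k\mapsto(g\mapsto k\cdot_1 g)$, factors through the complex Lie group of Lie automorphisms of $G$ that preserve $H$, and I would take $A$ to be that group (or any complex Lie subgroup of it through which the homomorphism factors).  Then $\G=X\times G$ is the trivial group $A$-bundle, $\H=X\times H$ its trivial subbundle, and $K^\C$ acts on $\G$ by $k\cdot(x,g)=(kx,k\cdot_1 g)$, an action by group $A$-bundle maps preserving $\H$.  For the principal bundle, take $P=X\times G$, with $\G$ acting from the right by $(x,g)\cdot(x,a)=(x,ga)$ and $K^\C$ acting by $k\cdot(x,g)=(kx,k\cdot_2 g)$.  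Then $P$ is the trivial principal $\G$-bundle, and the hypothesis that the multiplication map $G_2\times G_1\to G_2$ is equivariant is exactly what makes the right-action map $P\times_X\G\to P$ equivariant, so $P$ is a principal $K^\C$-$\G$-bundle.  Hence $E=P/\H=X\times(G/H)$ is a homogeneous $K^\C$-$\G$-bundle on $X$ whose induced action is $k\cdot(x,y)=(kx,k\bullet y)$.

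Finally I would identify the section spaces.  Sending a section $x\mapsto(x,f(x))$ of $E$ to the map $f\colon X\to G/H$ is a homeomorphism, in the compact-open topologies, from the space of holomorphic (respectively, continuous) sections of $E$ onto the space of holomorphic (respectively, continuous) maps $X\to G/H$, and it carries $K^\C$-equivariant sections to $K^\C$-equivariant maps and $K$-equivariant sections to $K$-equivariant maps.  Under this identification the inclusion of Corollary~\ref{cor:uncoupled} becomes the inclusion of Theorem~\ref{thm:main.result} for the bundle $E$, which is a weak homotopy equivalence; hence so is the former.  The argument is essentially formal and I do not expect a genuine obstacle: the only points needing care are the verification that $E$ really satisfies the definition of a homogeneous $K^\C$-$\G$-bundle --- above all the $K^\C$-equivariance of $P\times_X\G\to P$, which is precisely where the hypothesis on $G_2\times G_1\to G_2$ is used --- and the check that the section/map correspondence is a homeomorphism for the compact-open topology and restricts correctly on the equivariant subspaces.
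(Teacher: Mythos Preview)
Your proposal is correct and matches the paper's approach: the corollary is simply stated as the special case of Theorem~\ref{thm:main.result} in which $\G=X\times G$ with the diagonal action via $\cdot_1$, $P=X\times G$ with the diagonal action via $\cdot_2$, and $E=X\times(G/H)$, the equivariance hypothesis on $G_2\times G_1\to G_2$ being precisely what makes $P\times_X\G\to P$ equivariant. The paper does not spell out these verifications, so your write-up is in fact more detailed than the original.
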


There are many interesting special cases of this corollary.  Let us mention a few.
\begin{itemize}
\item  $G$ is reductive and acts on itself by left multiplication (with $H$ trivial).
\item  $H$ is reductive and acts on $G/H$ by left multiplication.  The geometry of such actions can be very complicated.  This is an active area of study in its own right.  As an example, take $H=\textrm{SO}(n,\C)$ to be the subgroup of $G=\textrm{SL}(n,\C)$ fixed by the holomorphic involution $A\mapsto (A^{-1})^t$.  Then $G/H$ is the space of symmetric bilinear forms on $\C^n$.
\item  The first and second actions are the same.  For example, $G$ is reductive and acts on itself by conjugation (with $H$ trivial).
\item  The first action is conjugation by elements of the normaliser $N_G(H)$, and the second action is of the form $k\cdot gH=gk^{-1}H=gHk^{-1}$.
\end{itemize}

Finally, we state the special case of no action.

\begin{corollary}   \label{cor:no.action}
Let $\G$ be a holomorphic group bundle on a reduced Stein space $X$, and let $E$ be a homogeneous holomorphic $\G$-bundle on $X$.  Then the inclusion of the space of holomorphic sections of $E$ over $X$ into the space of continuous sections is a weak homotopy equivalence.
\end{corollary}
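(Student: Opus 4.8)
The quickest route is to read Corollary \ref{cor:no.action} off Theorem \ref{thm:main.result} by taking $K$ to be the trivial group: then $K^\C$ is trivial, every continuous section is $K$-equivariant and every holomorphic section is $K^\C$-equivariant, so the conclusion of Theorem \ref{thm:main.result} specialises word for word to the assertion here. Since Theorem \ref{thm:main.result} is our main result and its proof is built on the non-equivariant case, I will also indicate how one would prove the corollary directly, as this is the backbone of everything that follows.

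For a direct proof, the plan is to recognise $E$ as an elliptic holomorphic fibre bundle over the Stein space $X$ and invoke Gromov's Oka principle in its parametric form. The fibre $F = G/H$ is a complex homogeneous space of $G$; the $G$-action supplies the dominating spray $s(\xi,gH) = (\exp_G\xi)\cdot gH$ on the trivial bundle $\mathfrak g\times(G/H)$, dominating at every point because the infinitesimal action $\mathfrak g\to T_{gH}(G/H)$ is surjective. Over a trivialising chart $U\subseteq X$ this is a fibre-dominating spray on $U\times F$. The transition maps of $E$ lie in $A\ltimes G$ acting on $F$; using $\mathrm{Ad}$ on the $G$-part and the identity $a\circ\exp_G = \exp_G\circ\,da$ for $a\in A\subseteq\Aut G$ on the $A$-part, one checks that the structure group acts on the spray parameter $\xi\in\mathfrak g$ through a Lie-algebra automorphism, namely exactly the cocycle defining the Lie-algebra bundle of $\G$. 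Hence the local sprays glue to a global fibre-dominating spray on $E$, carried by the pullback to $E$ of that Lie-algebra bundle, and $E$ is elliptic. Forstnerič's parametric Oka principle for sections of elliptic submersions over Stein spaces then gives the weak homotopy equivalence; for singular $X$ one uses the stratified version, with respect to a stratification of $X$ over which $\G$ and $P$ trivialise.

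The main obstacle in this direct argument is the gluing step: one must verify that the fibre spray inherited from the $G$-action is compatible with the full structure group $A\ltimes G$ of $E$, not merely with $G$, and that the patched object is a genuine spray (total space a manifold, submersive along the zero section). In the setting of Theorem \ref{thm:main.result} all of this has to be carried out $K^\C$-equivariantly, and with $K$-equivariant continuous sections on the topological side, which is where the real difficulty of the paper lies; with $K$ trivial those complications disappear and the corollary is immediate.
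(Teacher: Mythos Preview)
Your primary derivation---specialising Theorem \ref{thm:main.result} to the trivial group $K$---is exactly how the paper obtains the corollary, and the paper likewise remarks that the result can be had independently from modern Gromov-style Oka theory (citing Forstneri\v{c}'s book), in line with your sketch via a fibre-dominating spray. One correction to your framing: the proof of Theorem \ref{thm:main.result} is \emph{not} built on the non-equivariant case; it goes through Heinzner--Kutzschebauch's $NHC$-section machinery and the equivariant local triviality and homotopy-invariance theorems, so Corollary \ref{cor:no.action} is a consequence of the main theorem rather than an ingredient in its proof.
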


This corollary generalises a theorem of Ramspott \cite[Satz, p.~236]{Ramspott}, which states that the inclusion induces a bijection of path components (for a bundle $E$ with a trivial structure group bundle).  When $E$ is a principal bundle (again, with a trivial structure group bundle), the corollary is implicit in work of Forster and Ramspott \cite{Forster-Ramspott}, and may perhaps be said to be implicit also in the earlier work of Grauert \cite{GrauertLiesche} and Cartan \cite{Cartan58}.  Today the corollary follows from more general results of modern Gromov-style Oka theory, for example \cite[Corollary 5.4.8]{Forstneric-book}.

Arguing as in \cite[Section 16]{Larusson}, we can prove the following purely homotopy-theoretic consequence of Theorem \ref{thm:main.result} (conversely, the corollary obviously implies the theorem).  We denote the spaces of $K$-equivariant holomorphic and continuous sections of $E$ over $X$ by $\Gamma_\O(E)^K$ and $\Gamma_\mathscr C (E)^K$, respectively.  Note that for holomorphic sections, $K$-equivariance and $K^\C$-equivariance are equivalent.

\begin{corollary}   \label{cor:homotopy-consequence}
With the same assumptions as in Theorem \ref{thm:main.result}, if $A$ is a subcomplex of a CW complex $B$, and $f:A\to\Gamma_\O(E)^K$ is continuous, then the inclusion
\[ \{\textrm{extensions }B\to\Gamma_\O(E)^K\textrm{of } f\} \hookrightarrow \{\textrm{extensions }B\to\Gamma_\mathscr C (E)^K\textrm{of } f\} \]
is a weak homotopy equivalence.

\end{corollary}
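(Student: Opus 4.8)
The plan is to deduce the corollary from Theorem \ref{thm:main.result} by a formal homotopy-theoretic argument, essentially that of \cite[Section 16]{Larusson}. Write $\iota\colon\Gamma_\O(E)^K\hookrightarrow\Gamma_{\mathscr C}(E)^K$ for the inclusion; by the theorem it is a weak homotopy equivalence (and, as noted in the text, $K$- and $K^\C$-equivariance agree for holomorphic sections, so the source really is $\Gamma_\O(E)^K$). For topological spaces $B$ and $W$ let $\operatorname{Map}(B,W)$ be the space of continuous maps with the compact-open topology, and for a subspace $A\subseteq B$ and a continuous $f\colon A\to W$ let $\operatorname{Map}_f(B,W)\subseteq\operatorname{Map}(B,W)$ be the subspace of maps restricting to $f$ on $A$, so $\operatorname{Map}_\emptyset(B,W)=\operatorname{Map}(B,W)$. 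The corollary asserts that $\iota_*\colon\operatorname{Map}_f(B,\Gamma_\O(E)^K)\to\operatorname{Map}_{\iota f}(B,\Gamma_{\mathscr C}(E)^K)$ is a weak homotopy equivalence, and it clearly follows from the abstract statement: \emph{if $p\colon Y\to Z$ is a weak homotopy equivalence of topological spaces and $(B,A)$ is a CW pair, then for every continuous $f\colon A\to Y$ the map $p_*\colon\operatorname{Map}_f(B,Y)\to\operatorname{Map}_{pf}(B,Z)$ is a weak homotopy equivalence.}

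To prove this I would use the standard characterisation of a weak homotopy equivalence $p\colon Y\to Z$ by an \emph{approximate lifting property}: for every CW pair $(P,Q)$ and every commuting square with top edge $u\colon Q\to Y$ and bottom edge $v\colon P\to Z$ there is a map $\tilde v\colon P\to Y$ with $\tilde v|_Q=u$ and $p\tilde v\simeq v$ rel $Q$. Fix a basepoint $g\in\operatorname{Map}_f(B,Y)$, so $g|_A=f$, and give $S^n$ a CW structure having the basepoint $*$ as a $0$-cell. Since $S^n$ and $I$ are locally compact and $B$, being a CW complex, is compactly generated, the exponential law applies and identifies an element of $\pi_n(\operatorname{Map}_f(B,Y),g)$ with a map $S^n\times B\to Y$ whose restriction to $S^n\times A$ factors through $f$ and whose restriction to $\{*\}\times B$ is $g$; homotopies of such maps are described similarly using $S^n\times I\times B$. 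The subspaces of $S^n\times B$ and of $S^n\times I\times B$ on which these maps are prescribed are built from $\{*\}$, $\partial I$, $A$ and $B$, and hence are subcomplexes, a union of products of subcomplexes being a subcomplex. Applying the approximate lifting property of $p$ to these CW pairs yields first the surjectivity and then the injectivity of $\pi_n(\operatorname{Map}_f(B,Y),g)\to\pi_n(\operatorname{Map}_{pf}(B,Z),pg)$ for every $n\ge0$ (the two cases $n=0$ being handled by the pairs $(B,A)$ and $(B\times I,(B\times\partial I)\cup(A\times I))$). As $g$ runs over all of $\operatorname{Map}_f(B,Y)$, this is precisely the assertion that $p_*$ is a weak homotopy equivalence.

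Alternatively, one can organise the relative statement around the restriction maps $\operatorname{Map}(B,Y)\to\operatorname{Map}(A,Y)$ and $\operatorname{Map}(B,Z)\to\operatorname{Map}(A,Z)$, which are Hurewicz fibrations because the inclusion $A\hookrightarrow B$ of a subcomplex is a closed cofibration; their fibres over $f$ and $pf$ are $\operatorname{Map}_f(B,Y)$ and $\operatorname{Map}_{pf}(B,Z)$, and $p_*$ is a map of fibrations. Granting the absolute case $A=\emptyset$ above — that $p_*\colon\operatorname{Map}(W,Y)\to\operatorname{Map}(W,Z)$ is a weak homotopy equivalence for every CW complex $W$, which is what the approximate lifting property gives directly — one obtains the relative case from the five lemma applied to the long exact sequences of the two fibrations at each basepoint of $\operatorname{Map}_f(B,Y)$, together with the usual comparison at the level of $\pi_0$.

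I do not expect a genuine obstacle here: the entire analytic and geometric content of the corollary is already contained in Theorem \ref{thm:main.result}, and the remaining argument is soft. The only points needing a little care are the point-set topology of the compact-open topology — which causes no difficulty, since CW complexes are compactly generated and the exponential law holds for the products used — and the bookkeeping of basepoints and of $\pi_0$ in the five-lemma step. Finally, the converse implication asserted in the text is immediate: taking $B=S^n$, $A=\{*\}$ a $0$-cell, and $f$ constant turns the corollary into the isomorphisms $\pi_n(\Gamma_\O(E)^K)\cong\pi_n(\Gamma_{\mathscr C}(E)^K)$ for all $n$ and all basepoints, which is Theorem \ref{thm:main.result}.
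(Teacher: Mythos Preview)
Your proposal is correct and follows exactly the route the paper indicates: the paper does not give a proof at all, but simply says ``Arguing as in \cite[Section 16]{Larusson}'' and notes that the content is purely homotopy-theoretic once Theorem \ref{thm:main.result} is in hand. You have supplied precisely those details --- the abstract lemma that a weak equivalence $p\colon Y\to Z$ induces a weak equivalence $\operatorname{Map}_f(B,Y)\to\operatorname{Map}_{pf}(B,Z)$ for CW pairs, proved either via the approximate lifting property or via the Str{\o}m fibrations $\operatorname{Map}(B,-)\to\operatorname{Map}(A,-)$ and the five lemma --- and your point-set caveats (local compactness of $S^n$ and $I$ so that the products are CW and the exponential law applies) are the right ones.
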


Equivariant Oka theory started with the 1995 paper of Heinzner and Kutzschebauch \cite{Heinzner-Kutzschebauch}.  The present paper and our previous paper \cite{KLS} rely heavily on \cite{Heinzner-Kutzschebauch}.  As a corollary of Theorem \ref{thm:main.result}, we obtain the following strengthening of Heinzner and Kutzschebauch's main result on the classification of principal bundles with a group action (Theorem \ref{thm:HK-classification} below).  We describe the relationship between Theorems \ref{thm:main.result} and \ref{thm:HK-strengthened} at the end of Section \ref{sec:preliminaries}.

\begin{theorem}  \label{thm:HK-strengthened}
Let the complexification $K^\C$ of a compact real Lie group $K$ act holomorphically on a reduced Stein space $X$ and on a holomorphic group bundle $\G$ on $X$.  Let $P_1$ and $P_2$ be holomorphic principal $K^\C$-$\G$-bundles on $X$.  

Every continuous $K$-isomorphism $P_1\to P_2$ can be deformed through such isomorphisms to a holomorphic $K$-isomorphism.  In fact, the inclusion of the space of holomorphic $K$-isomorphisms $P_1\to P_2$ into the space of continuous $K$-isomorphisms is a weak homotopy equivalence.
\end{theorem}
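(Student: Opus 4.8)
The plan is to derive this from Theorem \ref{thm:main.result} by realising the space of isomorphisms $P_1 \to P_2$ as the space of sections of an appropriate homogeneous $K^\C$-$\G'$-bundle, for a suitable group bundle $\G'$. As indicated in the final paragraph of the excerpt, the group bundle $\Aut P_2$ is a holomorphic group bundle with fibre $G$, carrying an induced structure group that is a complex Lie group and an induced $K^\C$-action making it a holomorphic group $K^\C$-bundle. The bundle $\Iso(P_1, P_2)$ of isomorphisms is a holomorphic principal bundle with structure group bundle $\Aut P_2$, and the $K^\C$-action on it makes the action map $\Iso(P_1, P_2) \times_X \Aut P_2 \to \Iso(P_1, P_2)$ equivariant, so $\Iso(P_1, P_2)$ is a principal $K^\C$-$(\Aut P_2)$-bundle in the sense of the paper. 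Taking $\H$ to be the trivial subgroup bundle (fibre the trivial subgroup of $G$), the quotient $E := \Iso(P_1,P_2)/\H = \Iso(P_1,P_2)$ is itself a homogeneous holomorphic $K^\C$-$(\Aut P_2)$-bundle.

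The key identification to make precise is that $K$-equivariant continuous (resp. $K^\C$-equivariant holomorphic) isomorphisms $P_1 \to P_2$ correspond bijectively, and homeomorphically for the compact-open topologies, to $K$-equivariant continuous (resp. $K^\C$-equivariant holomorphic) sections of $\Iso(P_1, P_2)$ over $X$. This is the standard correspondence between sections of an isomorphism bundle and bundle isomorphisms; what needs checking is that it is compatible with the $K$- and $K^\C$-actions, i.e. that a section is equivariant if and only if the corresponding isomorphism intertwines the $K^\C$-actions on $P_1$ and $P_2$ in the appropriate twisted sense, and that for holomorphic objects $K$-equivariance and $K^\C$-equivariance coincide (as the paper already notes for sections). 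Once this dictionary is in place, Theorem \ref{thm:main.result} applied to $E = \Iso(P_1,P_2)$ says exactly that the inclusion $\Gamma_\O(E)^K \hookrightarrow \Gamma_{\mathscr C}(E)^K$ is a weak homotopy equivalence, which translates to the asserted statement about isomorphisms. The first sentence of the theorem — that every continuous $K$-isomorphism can be deformed through such isomorphisms to a holomorphic one — is the statement that the inclusion induces a surjection on $\pi_0$, a special case of the weak homotopy equivalence.

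The main obstacle, and the point requiring genuine care rather than formality, is verifying that $\Aut P_2$ and $\Iso(P_1,P_2)$ genuinely fit the axiomatic framework of Section \ref{sec:preliminaries}: that the structure group of $\Aut P_2$, a priori a subgroup of the biholomorphism group of $G$, is actually a complex Lie subgroup of the \emph{Lie automorphism group} of $G$ (so that $\Aut P_2$ is a group $A$-bundle for a legitimate $A$), and that the induced $K^\C$-action on $\Aut P_2$ is by group $A$-bundle maps. This hinges on the observation that an automorphism of the principal $\G$-bundle $P_2$ over a point induces, via conjugation, a well-defined element of the automorphism group of the fibre $G$ of $\Aut P_2$, compatibly with the cocycle data; one must track the $A \ltimes G$-bundle structure on $P_2$ through to an honest structure group for $\Aut P_2$. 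This bookkeeping is presumably carried out in the body of the paper; granting it, the deduction of Theorem \ref{thm:HK-strengthened} from Theorem \ref{thm:main.result} is immediate via the section--isomorphism correspondence.
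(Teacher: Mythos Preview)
Your reduction of Theorem~\ref{thm:HK-strengthened} to Theorem~\ref{thm:main.result} applied to the principal $K^\C$-$(\Aut P_2)$-bundle $E=\Iso(P_1,P_2)$ is correct and is precisely the paper's route for the second (weak homotopy equivalence) statement; the verification that $\Aut P_2$ and $\Iso(P_1,P_2)$ sit inside the framework is asserted in the introduction rather than worked out in detail, so your caveat there is apt.

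One point of logical order is worth flagging. In the paper's development, the \emph{first} sentence of Theorem~\ref{thm:HK-strengthened} (the $\pi_0$-surjectivity) is not deduced from Theorem~\ref{thm:main.result} but is obtained beforehand, directly from Theorem~\ref{thm:HK-classification}(b) combined with Proposition~\ref{prp:topological.fact} (which transports the Heinzner--Kutzschebauch deformation over the Kempf--Ness set $R$ to one over all of $X$). This is then \emph{used} in the first part of the proof of Theorem~\ref{thm:main.result} (to deform the topological $K$-isomorphism $\sigma\circ\tau:Q'\times^\H\G\to P$ to a holomorphic one). Your deduction of the first statement as the $\pi_0$ case of the second is valid once Theorem~\ref{thm:main.result} is in hand, and there is no circularity since Theorem~\ref{thm:main.result} invokes Theorem~\ref{thm:HK-classification}(b) and Proposition~\ref{prp:topological.fact} directly rather than Theorem~\ref{thm:HK-strengthened}; but the paper's ordering makes the dependency structure more transparent.
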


In the next section, we present the basic results that we need on generalised principal bundles: a topological equivariant local triviality theorem (Theorem \ref{thm:local.triviality}) and a homotopy invariance theorem (Theorem \ref{thm:homotopy.theorem}).  We also review the results that we need from \cite{Heinzner-Kutzschebauch}.  Section \ref{sec:proofs} consists of proofs.  In Section \ref{sec:equivar.isos}, we use Theorem \ref{thm:main.result} to strengthen an Oka principle for equivariant isomorphisms from \cite{KLS}.

\smallskip\noindent
\textit{Acknowledgement.}  We thank Michael Murray for help with the theory of generalised principal bundles.

\section{Preliminaries}  \label{sec:preliminaries}

\noindent
As in the introduction, let $X$ be a reduced Stein space and $\G$ be a holomorphic group bundle on $X$, and let $K$ be a compact Lie group with complexification $K^\C$ that acts holomorphically on $X$ and $\G$.  Let $\pi:X\to X\git K^\C$ denote the categorical quotient map.  (Here, the complexification $X^\C$ in \cite{Heinzner-Kutzschebauch} is the same as $X$.)  

To every real-analytic $K$-invariant strictly plurisubharmonic exhaustion function on $X$ (such functions exist) is associated a real-analytic subvariety $R$ of $X$ called a Kempf-Ness set.  It consists of precisely one $K$-orbit in every closed $K^\C$-orbit in $X$.  Indeed, the inclusion $R\hookrightarrow X$ induces a homeomorphism $R/K\to X\git K^\C$, where the orbit space $R/K$ carries the quotient topology.  The following result, in its original form, is due to Neeman \cite{Neeman1985}; see also \cite{Schwarz1989} and \cite{Heinzner-Huckleberry}.

\begin{theorem}  \cite[p.~341]{Heinzner-Kutzschebauch}   \label{thm:HK-retraction}
There is a real-analytic $K$-invariant strictly plurisubharmonic exhaustion function on $X$, whose Kempf-Ness set $R$ is a $K$-equivariant continuous strong deformation retract of $X$, such that the deformation preserves the closure of each $K^\C$-orbit.
\end{theorem}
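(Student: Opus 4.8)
The plan is to exhibit $R$ as the zero set of a real-analytic ``gradient-type'' vector field $W$ on $X$ and to take for the retracting homotopy the forward flow of $W$, reparametrized so that infinite time becomes the parameter value $1$.

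First I would fix a potential. A closed analytic embedding $X\hookrightarrow\C^N$ and restriction of $|z|^2$ give a real-analytic strictly plurisubharmonic exhaustion of $X$; averaging it over $K$ against Haar measure produces a real-analytic $K$-invariant strictly plurisubharmonic exhaustion $\rho$ of $X$ (averaging preserves real-analyticity because the integrand is jointly real-analytic in $(k,x)$ and $K$ is compact, preserves strict plurisubharmonicity, and preserves the exhaustion property since $\{\rho\le c\}$ is closed and contained in the compact set $K\cdot\{|z|^2\le c\}$); being a continuous exhaustion, $\rho$ is bounded below. Then $\omega=dd^c\rho$ is a $K$-invariant Kähler form and, for $\xi\in\lie k=\operatorname{Lie}(K)$, the formula $\mu^\xi(x):=d\rho_x(J\xi_X)$ (with $\xi_X$ the vector field on $X$ induced by $\xi$ and $J$ the complex structure of $X$) defines, after the usual normalization, a moment map $\mu\colon X\to\lie k^*$ for the $K$-action on $(X,\omega)$; one verifies $\iota_{\xi_X}\omega=d\mu^\xi$ from Cartan's formula using $K$-invariance of $\rho$ and $\omega$. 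The Kempf--Ness set is $R=\mu^{-1}(0)$. Fixing a $K$-invariant identification $\lie k^*\cong\lie k$, I define the real-analytic vector field
\[ W(x)=-J\,\mu(x)_X(x),\qquad x\in X, \]
regarding $\mu(x)$ as an element of $\lie k$. Restricted to any $K^\C$-orbit, $W$ is the negative gradient flow of $\rho$ for a natural $K^\C$-invariant metric on the orbit (at a point $x$ of the orbit one checks directly that $-\nabla(\rho|_{\text{orbit}})$ equals $-J\mu(x)_X$), so along orbits it is a real-analytic gradient flow; moreover $[g]\mapsto\rho(gx)$ is geodesically convex on the relevant symmetric space, which will control the behaviour on closed orbits.

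Next I would record the elementary properties of $W$, all verified by direct computation. (i) $W\rho=-\lvert\mu\rvert^2\le0$, with equality exactly on $R$; moreover $W(x)=0$ if and only if $\mu(x)=0$, since $\mu(x)\in\lie k_x$ (the stabiliser subalgebra) would force $\lvert\mu(x)\rvert^2=\mu^{\mu(x)}(x)=d\rho_x(J\mu(x)_X)=0$ because $\mu(x)_X(x)=0$. Thus $R$ is precisely the zero set of $W$. (ii) Both $\mu(x)_X$ and $J\mu(x)_X=(i\mu(x))_X$ are tangent to the $K^\C$-orbit through $x$, so $W$ is tangent to every $K^\C$-orbit; hence the flow of $W$ preserves every orbit, and therefore every orbit closure $\overline{K^\C x}$, which is a $K^\C$-invariant Stein subvariety of $X$. (iii) $W$ is $K$-invariant: $\mu$ is $K$-equivariant, the chosen identification $\lie k^*\cong\lie k$ is $\mathrm{Ad}$-equivariant, fundamental vector fields transform equivariantly, and $J$ commutes with the holomorphic $K$-action. (iv) The forward flow $\Phi_t$, $t\ge0$, of $W$ is complete: along a flow line $\rho$ is nonincreasing by (i), so the line stays in the compact set $\{\rho\le\rho(x)\}$, on which $W$ is bounded.

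The crux, and the only place where real-analyticity is essential, is to show that $\Phi_t(x)$ converges as $t\to\infty$ to a point $r(x)\in R$, locally uniformly in $x$, with $r\colon X\to R$ continuous. That $\mu(\Phi_t(x))\to0$ is formal: otherwise, by (i), $\rho$ would decrease along the flow line at a rate bounded away from $0$ along a sequence of times, contradicting the lower bound for $\rho$ from (iv). Convergence to a single point does not follow formally; one invokes the {\L}ojasiewicz gradient inequality for the real-analytic function $\rho$ near a limit point of a flow line (along the orbit directions), which makes flow lines of $W$ have finite length and converge, and which supplies the decay rates needed for local uniformity in $x$ and for continuity of $r$. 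Orbits that are not closed, where the flow line leaves every compact subset of its orbit while staying in a compact subset of $X$, require in addition the Hilbert--Mumford description of orbit closures as nested chains of orbits together with an induction on dimension. This convergence statement is exactly the content of Neeman's theorem \cite{Neeman1985} (see also \cite{Schwarz1989} and \cite{Heinzner-Huckleberry}), and I regard it as the main obstacle; everything else is routine once $W$ has been identified. Finally, set $H(x,s)=\Phi_{s/(1-s)}(x)$ for $0\le s<1$ and $H(x,1)=r(x)$: then $H$ is continuous, $H(\cdot,0)=\id$, $H(\cdot,1)$ maps into $R$, and $H(x,s)=x$ for all $s$ when $x\in R$ by (i), so $H$ is a strong deformation retraction of $X$ onto $R$; it is $K$-equivariant by (iii) and carries $\overline{K^\C x}\times[0,1]$ into $\overline{K^\C x}$ by (ii). The remaining classical assertions recalled in the excerpt---that $R$ meets each closed $K^\C$-orbit in exactly one $K$-orbit and that $R/K\to X\git K^\C$ is a homeomorphism---follow from the geodesic convexity just mentioned and are not needed for the deformation-retract statement itself.
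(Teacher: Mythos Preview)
The paper does not prove this theorem: it is quoted from \cite[p.~341]{Heinzner-Kutzschebauch}, with credit to Neeman's earlier algebraic version and to \cite{Schwarz1989}, \cite{Heinzner-Huckleberry}. Your sketch is precisely the moment-map/gradient-flow argument used in those references and is correct in outline, so there is nothing to compare against in the present paper.

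Two small points are worth tightening. First, $X$ is only a reduced Stein \emph{space}, possibly singular, so $J$, tangent vectors, and the vector field $W$ need interpretation at singular points; the cleanest fix is to carry out the entire construction in the ambient $\C^N$ you already invoked (the averaged $\rho$, the moment map, and $W$ all live naturally on $\C^N$, and the flow of $W$ preserves $X$ because it preserves each $K^\C$-orbit and $X$ is $K^\C$-invariant). Second, your verification that $W(x)=0\Leftrightarrow\mu(x)=0$ is slightly roundabout: the identity $W\rho(x)=-\lvert\mu(x)\rvert^2$ already gives the nontrivial implication directly, with no need to invoke $\lie k_x$.
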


In the following, we take $R$ to be a Kempf-Ness set as in this theorem.

Let $C$ be a compact Hausdorff space and $N\subset H$ be closed subsets of $C$, such that $N$ is a strong deformation retract of $C$.  We define a sheaf $\Q(R)$ of topological groups on $X\git K^\C$ as follows.  For each open subset $V$ of $X\git K^\C$, the group $\Q(R)(V)$ consists of all $K$-equivariant $NHC$-sections of $\G$ over $W=(\pi^{-1}(V)\times H)\cup((\pi^{-1}(V)\cap R)\times C)$.  By an $NHC$-section of $\G$ over $W$, we mean a continuous map $s:W\to\G$ such that:
\begin{itemize}
\item  for every $c\in C$, the map $s(\cdot,c)$ is a continuous section of $\G$ over $\pi^{-1}(V)\cap R$,
\item  for every $c\in H$, $s(\cdot,c)$ is a holomorphic section of $\G$ over $\pi^{-1}(V)$,
\item  for every $c\in N$, $s(\cdot,c)$ is the identity section of $\G$ over $\pi^{-1}(V)$.
\end{itemize}
The topology on $\Q(R)(V)$ is the compact-open topology.

The main technical result of \cite{Heinzner-Kutzschebauch} is the following.

\begin{theorem}  \cite[p.~324]{Heinzner-Kutzschebauch}   \label{thm:HK-NHC}
\begin{enumerate}
\item[(a)] The topological group $\Q(R)(X\git K^\C)$ is path connected.
\item[(b)] If $U$ is Runge in $X\git K^\C$, then the image of $\Q(R)(X\git K^\C)$ in $\Q(R)(U)$ is dense.
\item[(c)] $H^1(X\git K^\C,\Q(R))=0$.
\end{enumerate}
\end{theorem}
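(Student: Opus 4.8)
The plan is to treat Theorem~\ref{thm:HK-NHC} as a Cartan--Oka--Grauert-type statement for the sheaf of groups $\Q(R)$ over the reduced Stein space $Q:=X\git K^\C$ (recall $R/K\cong Q$), and to prove (a), (b) and (c) \emph{simultaneously} by induction over an exhaustion $Q_1\Subset Q_2\Subset\cdots$ of $Q$ by relatively compact Runge Stein open sets, in the spirit of Grauert's proof of the Oka principle. In this scheme (b) is exactly the approximation step needed to pass from $Q_j$ to $Q_{j+1}$ and then to the limit, (a) provides the base case and keeps the solution sets connected along the induction, and (c) is deduced at each stage from a Cartan patching argument whose two-set splitting lemma is itself fed by (a) and (b).

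The first task is the local model. Over a small enough Stein $V\subset Q$, the slice theorem for reductive group actions on Stein spaces gives $\pi^{-1}(V)\cong K^\C\times_L Y$ with $L$ reductive and $Y$ an $L$-stable Stein slice, and in this model both the Kempf-Ness set $R$ and the equivariant group bundle $\G$ have explicit descriptions. Holomorphic $K^\C$-equivariant sections of $\G$ near the identity section are modelled on sections of the Lie algebra bundle $\lie g$ of $\G$; this is a coherent $K^\C$-equivariant analytic sheaf on $X$, and the sheaf of $K^\C$-invariants of $\pi_*\lie g$ on $Q$ is coherent, so Cartan's Theorems~A and~B apply on $V$. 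Together with the exponential map of $\G$ this yields, over small $V$, both the existence of holomorphic sections with prescribed jets along lower strata and the local triviality of $\Q(R)$-torsors. The continuous-over-$R$ and identity-over-$N$ constraints are then absorbed using the two strong deformation retractions at our disposal: the $K$-equivariant, orbit-closure-preserving retraction of $X$ onto $R$ from Theorem~\ref{thm:HK-retraction}, and the given retraction of $C$ onto $N$. These let one contract the $C$-direction of an $NHC$-section onto $N$, where it is the identity, reducing a local $\Q(R)$-problem, up to a path in $\Q(R)(V)$, to a genuinely holomorphic problem on $X\times H$ to which Cartan theory applies.

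Granting local solvability and local triviality, statement (c) at stage $Q_j$ follows from the usual Cartan patching over a locally finite Stein refinement: a $\Q(R)$-cocycle is trivialised one Cartan pair at a time, the splitting lemma for a pair $(V_1,V_2)$ being proved by the standard iteration, whose convergence uses the approximation property (b) on $V_1\cap V_2$ and whose start uses the connectedness (a). With (c) in hand on $Q_j$, statement (a) on $Q_j$ is obtained by factoring a global section into pieces supported near the sets of the cover and deforming each to the identity through $\Q(R)$, and statement (b) for the pair $Q_j\Subset Q_{j+1}$ reduces, after the $C$-direction contraction, to Oka--Weil approximation of holomorphic sections of $\G$ on that Stein pair. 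Feeding these back into the induction and passing to the limit over the exhaustion gives (a), (b) and (c) on all of $Q$.

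The main obstacle is the mixed regularity built into $\Q(R)$: its sections are holomorphic along $X\times H$ but merely continuous along $R\times C$, so neither pure Stein theory nor pure homotopy theory applies on its own, and every gluing, approximation and deformation must respect this two-tier structure. The decisive device --- and the place where the real work lies --- is the interplay of the retractions $R\hookrightarrow X$ and $N\hookrightarrow C$: they allow one to push any $NHC$-problem toward the locus $X\times H$ where holomorphic methods are available, while the orbit-closure-preserving property in Theorem~\ref{thm:HK-retraction} is what ensures that the pushed problem still descends to $Q$ and that the holomorphic data on $X\times H$ stays under control throughout the process.
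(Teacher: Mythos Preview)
The paper does not prove Theorem~\ref{thm:HK-NHC}. It is quoted verbatim from Heinzner and Kutzschebauch's 1995 paper (the citation \cite[p.~324]{Heinzner-Kutzschebauch} attached to the theorem environment is the attribution), and the present paper uses it as a black box in the proof of Theorem~\ref{thm:main.result}. So there is no ``paper's own proof'' to compare your proposal against.

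As for the proposal itself: what you have written is a plausible high-level outline of the strategy that Heinzner and Kutzschebauch in fact pursue --- slice-theoretic local models, linearisation via the Lie algebra bundle and the exponential map, a Cartan-pair splitting lemma, and a Grauert-style bump-and-exhaust induction interleaving (a), (b), and (c). But it remains an outline, not a proof. The substantive work in \cite{Heinzner-Kutzschebauch} lies precisely in the steps you pass over with phrases like ``granting local solvability and local triviality'' and ``the splitting lemma for a pair $(V_1,V_2)$ being proved by the standard iteration'': establishing these in the mixed holomorphic/continuous $NHC$ setting, with the $K$-equivariance and the Kempf--Ness constraint, is a lengthy and delicate business that occupies the bulk of that paper. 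Your identification of the main obstacle (the two-tier regularity) and of the key device (the orbit-closure-preserving retraction of Theorem~\ref{thm:HK-retraction} combined with the retraction $C\to N$) is accurate, but turning that insight into actual estimates for the splitting and approximation steps is where the real content lives, and none of that is present here.
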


Next we state Heinzner and Kutzschebauch's main result on the classification of principal $K$-$\G$-bundles (called $\G$-principal $K$-bundles in \cite{Heinzner-Kutzschebauch}).

\begin{theorem}  \cite[p.~341, 345]{Heinzner-Kutzschebauch}   \label{thm:HK-classification}
{\rm (a)}  Every topological principal $K$-$\G$-bundle on $X$ is topologically $K$-isomorphic to a holomorphic principal $K^\C$-$\G$-bundle on $X$.

{\rm (b)}  Let $P_1$ and $P_2$ be holomorphic principal $K^\C$-$\G$-bundles on $X$. Let $c$ be a continuous $K$-equivariant section of $\Iso(P_1,P_2)$ over $R$. Then there exists a homotopy of continuous $K$-equivariant sections $\gamma(t)$, $t\in[0,1]$, of $\Iso(P_1,P_2)$ over $R$ such that $\gamma(0)=c$ and $\gamma(1)$ extends to a holomorphic $K$-equivariant isomorphism from $P_1$ to $P_2$.
\end{theorem}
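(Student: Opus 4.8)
The plan is to derive both parts from the Kempf--Ness deformation retraction (Theorem \ref{thm:HK-retraction}) together with the three properties of the NHC sheaf $\Q(R)$ (Theorem \ref{thm:HK-NHC}), transporting all equivariant bundle data from $X$ down to the Stein categorical quotient $X \git K^\C$. The guiding principle is that a section of $\Q(R)$, for a suitable choice of the triple $N \subset H \subset C$, simultaneously records a holomorphic gauge transformation over $X$ and a homotopy over $R$ joining it to the identity; concretely I would take $C = [0,1]$, $H = \{0,1\}$, and $N = \{0\}$, so that a section over $V$ is a path of continuous $K$-equivariant sections over $\pi^{-1}(V) \cap R$ whose endpoints are holomorphic over $\pi^{-1}(V)$ and whose initial point is the identity. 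With this dictionary in place, the three assertions of Theorem \ref{thm:HK-NHC} play their classical Oka--Grauert roles: the vanishing $H^1(X \git K^\C, \Q(R)) = 0$ solves the patching (Cousin-type) problem at each stage, the density statement supplies the Oka--Weil approximation needed to pass to the limit over a Runge exhaustion of $X \git K^\C$, and the path-connectedness of $\Q(R)(X \git K^\C)$ furnishes the homotopies.

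For part (a) I would first invoke Theorem \ref{thm:HK-retraction}: since $R \hookrightarrow X$ is a $K$-equivariant strong deformation retract, a topological principal $K$-$\G$-bundle on $X$ is determined up to continuous $K$-isomorphism by its restriction to $R$, that is, by a continuous $K$-equivariant $\G$-cocycle on the pullback to $R$ of a cover of $X \git K^\C$ by Runge Stein opens over which $\G$ trivialises. The problem is then to replace this continuous cocycle over $R$ by a holomorphic $K^\C$-cocycle over $X$ in the same continuous $K$-cohomology class over $R$. Interpreting the comparison of the two as a $1$-cocycle with values in $\Q(R)$ and appealing to the vanishing in Theorem \ref{thm:HK-NHC}(c), with the density (b) ensuring convergence as the exhaustion is traversed, I obtain a holomorphic $K^\C$-$\G$-bundle together with a continuous $K$-isomorphism to the given one.

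For part (b), note that $Q := \Iso(P_1, P_2)$ is itself a holomorphic principal $K^\C$-bundle with structure group bundle $\Aut(P_2)$, and a continuous $K$-equivariant section of $Q$ over $R$ is precisely a continuous $K$-isomorphism $P_1 \to P_2$ over $R$. Choosing holomorphic trivialisations of $Q$ over the pullback of a Runge Stein cover, the section $c$ is recorded by continuous $K$-equivariant gauge data over $R$ whose failure to extend holomorphically over $X$ is measured by a $\Q(R)$-valued cocycle for the group bundle $\Aut(P_2)$. The vanishing (c), together with the approximation (b) driving a Grauert-style bumping induction over the Runge exhaustion of $X \git K^\C$, produces a holomorphic $K^\C$-equivariant section of $Q$ over $X$, namely the holomorphic isomorphism that will be $\gamma(1)$; the path-connectedness (a) then connects $c$ to its restriction through continuous $K$-equivariant sections over $R$, yielding the homotopy $\gamma(t)$ with $\gamma(0) = c$.

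Granting the NHC theorem, the analytic heart is already supplied, so the main obstacle is the non-abelian, equivariant bookkeeping that converts its three properties into the two classification statements. Three points require care. First, the triple $(C, H, N)$ must be arranged so that $\Q(R)$-sections encode exactly \lq\lq holomorphic over $X$ plus a homotopy over $R$ to the identity\rq\rq, and the cocycles extracted from $c$, or from a topological bundle, must be matched to this encoding. Second, because $\G$ and $\Aut(P_2)$ are non-abelian, one works with the first cohomology of a sheaf of non-abelian groups and must linearise multiplicatively, splitting gauge transformations close to the identity over the two pieces of each bump following Cartan's method, rather than additively. Third, $K$-equivariance and the Kempf--Ness compatibility---the requirement, built into Theorem \ref{thm:HK-retraction}, that the deformation preserve the closure of each $K^\C$-orbit---must be maintained through every approximation and patching step, and it is exactly the delicate interplay of continuous-over-$R$ and holomorphic-over-$X$ regularity that the sheaf $\Q(R)$ is designed to control.
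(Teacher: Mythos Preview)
The paper does not prove this theorem. It is quoted from Heinzner and Kutzschebauch \cite[pp.~341, 345]{Heinzner-Kutzschebauch}, and the only ``proof'' content in the paper is the paragraph following the statement, which explains that in \cite{Heinzner-Kutzschebauch} part (a) is stated only for a trivial structure group bundle $\G=X\times G$, but that under the present hypotheses the argument there applies verbatim. So there is no proof in this paper to compare your proposal against.

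Your outline is in the spirit of the Heinzner--Kutzschebauch machinery: the NHC sheaf $\Q(R)$ and the three properties in Theorem~\ref{thm:HK-NHC} are precisely the tools they use, and your choice $C=[0,1]$, $H=\{0,1\}$, $N=\{0\}$ is a reasonable instantiation for encoding ``holomorphic at the ends, homotopy over $R$, identity at time $0$''. That said, as written it remains a high-level plan rather than a proof: the passage from a topological principal $K$-$\G$-bundle to a $\Q(R)$-valued $1$-cocycle in part (a) needs the equivariant local triviality over $R$ (what the present paper isolates as Theorem~\ref{thm:local.triviality}) to produce local holomorphic models, and the ``Grauert-style bumping induction'' you invoke in part (b) is exactly the substantial analytic work that \cite{Heinzner-Kutzschebauch} carries out and that the present paper does not reproduce. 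None of this is wrong, but be aware that you are sketching the proof of the cited reference, not of anything proved in this paper.
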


In \cite{Heinzner-Kutzschebauch}, part (a) is only proved in the case of the structure group bundle $\G$ being a product bundle $X\times G\to X$ with a diagonal action of $K^\C$.  This is precisely the special case of \textit{ordinary} principal bundles.  On the other hand, both parts (a) and (b) are proved under the weaker assumption that only the compact group $K$ acts on the Stein space $X$, not the complexification $K^\C$.  In this sense, part (b) is more general in \cite{Heinzner-Kutzschebauch}.  Concerning part (a), the reason for the the restriction to a product group bundle with a diagonal $K$-action was that it was not known whether a general group $K$-bundle on a Stein $K$-space must extend to a $K^\C$-bundle on $X^\C$.  If one assumes, as we do in the present paper, that the group bundle $\G$ lives on a Stein $K^\C$-space and has a $K^\C$-action, then the proof of part (a) in \cite{Heinzner-Kutzschebauch} applies verbatim and yields part (a) as stated here.

A key to the proof of Theorem \ref{thm:HK-classification}(a), as well as the proof of our Theorem \ref{thm:main.result}, is a topological equivariant local triviality theorem, claimed in \cite[Remark, p.~343]{Heinzner-Kutzschebauch}.  Because of its importance we give a detailed  proof in Section~\ref{sec:proofs}.  Before stating the theorem we briefly recall some basic notions of the theory of transformation groups.  Let a compact Lie group $K$ act continuously on a topological space $X$.  A \textit{slice} at a point $x\in X$ with stabiliser $K_x$ is a locally closed $K_x$-invariant subset $S$ of $X$ containing $x$, such that the $K$-equivariant map $K\times^{K_x} S\to X$, $[k,s]\mapsto k s$, is a homeomorphism onto a $K$-invariant neighbourhood of the orbit $Kx$ of $x$.  The map or its image, the neighbourhood $K S\simeq K\times^{K_x} S$, is then called a \textit{tube} about $Kx$.  

The classical topological \textit{slice theorem} states that if $X$ is completely regular, then there is a slice at each point of $X$.  There is also a slice theorem in the smooth category such that $S$ is $K_x$-diffeomorphic to a $K_x$-module, namely $T_x X/T_x Kx$.  And in the holomorphic category, there is a slice theorem for actions of a reductive complex Lie group on a Stein space (see \cite{Snow}, \cite[\S 5.5, \S 6.3]{Heinzner1991}, \cite[p.~331]{Heinzner-Kutzschebauch}).

The setting of the holomorphic slice theorem is as follows.  Let the complexification $K^\C$ of a compact Lie group $K$ act holomorphically on a Stein space $X$ with categorial quotient $\pi:X\to X\git K^\C$.  Let $x$ be a point in a Kempf-Ness set $R$ with stabiliser $L^\C$, where $L=K_x$.  Let $V=T_xX/T_x K^\C x$ be the normal space to the orbit $K^\C x$ at $x$.  It is an $L^\C$-module.  With respect to the identification $K^\C/L^\C\simeq K^\C x$, the normal bundle $N$ of $K^\C x$ in $X$ is isomorphic to $K^\C\times^{L^\C} V$.  

\begin{theorem}   \label{thm:holomorphic.slice}
There is a $K^\C$-invariant Stein neighbourhood $U$ of the orbit $K^\C x$ in $X$, $K^\C$-equivariantly biholomorphic to a subvariety $A$ of a neighbourhood of the zero section of $N$.  The embedding $\iota:U\to N$ maps $K^\C x$ biholomorphically onto the zero section of $N$.  Moreover, $U$ can be chosen so that the following hold.
\begin{enumerate}
\item[(i)]  $U$ is saturated with respect to $\pi$.
\item[(ii)]  There are arbitrarily small $L$-invariant neighbourhoods $D$ of the origin in $V$ with $D\cap A\subset \iota(U)$, such that if we identify $D\cap A$ with a locally closed $L$-invariant subvariety of $X$ through $x$, then $K^\C(D\cap A)$ is saturated with respect to $\pi$.
\item[(iii)]  Every $L$-equivariant holomorphic map from $D\cap A$ into a complex $K^\C$-space $Y$ extends uniquely to a $K^\C$-equivariant holomorphic map $K^\C(D\cap A)\to Y$.
\end{enumerate}
\end{theorem}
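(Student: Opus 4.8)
The plan is to prove this holomorphic version of Luna's slice theorem by combining an equivariant embedding into a $K^\C$-module with the standard transversal-slice construction, and then to read off the refinements (i)--(iii) from properties of the categorical quotient and the Kempf-Ness picture. Since $x\in R$, its orbit $K^\C x$ is closed and $L^\C$ is reductive. First I would pass to a $K^\C$-invariant, $\pi$-saturated Stein neighbourhood of $K^\C x$ and, by the equivariant embedding theorem for reductive actions on Stein spaces (see \cite{Heinzner1991}), realise it as a closed $K^\C$-stable subvariety of a $K^\C$-module $W$, with $x$ mapping to a point fixed by $L^\C$. Since $X\subseteq W$ we have $T_x(K^\C x)\subseteq T_xX\subseteq W$; using reductivity of $L^\C$, choose an $L^\C$-invariant decomposition $W=T_x(K^\C x)\oplus S_0\oplus S_1$, where $S_0$ is a complement to $T_x(K^\C x)$ in $T_xX$ (so $S_0\cong V$ canonically) and $S_1$ is a complement to $T_xX$ in $W$; put $S=S_0\oplus S_1$. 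Intersecting $X$ near $x$ with the $L^\C$-invariant affine slice $x+S$ yields an $L^\C$-invariant locally closed Stein subvariety $\Sigma\ni x$ with $T_x\Sigma=T_xX\cap S=S_0\cong V$, and the linear projection $x+S\to S_0$ restricts near $x$ to an $L^\C$-equivariant biholomorphism of $\Sigma$ onto a subvariety $A_0$ of a neighbourhood of $0$ in $V$. Setting $A=K^\C\times^{L^\C}A_0$ inside $N=K^\C\times^{L^\C}V$ gives the candidate subvariety of a neighbourhood of the zero section.

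Next I would form the $K^\C$-equivariant holomorphic map $\mu\colon K^\C\times^{L^\C}\Sigma\to X$, $[k,s]\mapsto k\cdot s$; the source is Stein because $L^\C$ is reductive and $K^\C\times\Sigma$ is Stein, and $\mu$ restricts to the orbit map $K^\C/L^\C\to K^\C x$ on the zero section. At $[e,x]$ the tangent space of the source is $T_x(K^\C x)\oplus T_x\Sigma=T_x(K^\C x)\oplus S_0=T_xX$, and $d\mu$ is the obvious isomorphism, so by $K^\C$-equivariance $\mu$ is a local biholomorphism along the whole zero section. Replacing $\Sigma$ by the preimage of a small enough neighbourhood of $\pi(x)$ in the quotient $\Sigma\git L^\C$, a Luna-type argument --- using that $\mu$ is compatible with the categorical quotients and is an isomorphism over $\pi(x)$, see \cite{Snow} and \cite[\S\,5.5, \S\,6.3]{Heinzner1991} --- upgrades $\mu$ to a $K^\C$-equivariant biholomorphism onto a $K^\C$-invariant, $\pi$-saturated Stein neighbourhood $U$ of $K^\C x$. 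Transporting the embedding $\Sigma\xrightarrow{\sim}A_0\subset V$ through $\mu\inv$ yields the desired $\iota\colon U\to N$ with image $A$, proving the main assertion and (i).

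For (ii), with $U$ identified with $A=K^\C\times^{L^\C}A_0$ and $\pi$ corresponding to the projection to $A_0\git L^\C$, one uses that the closure of a $K^\C$-orbit in $U$ meets $A_0$ in the closure of a single $L^\C$-orbit; choosing $D$ among arbitrarily small $L$-invariant (not merely $L^\C$-invariant) neighbourhoods of $0$ in $V$, the Kempf-Ness deformation of Theorem \ref{thm:HK-retraction}, applied to the $L^\C$-action on the slice, lets one arrange that every such orbit closure meeting $D\cap A$ lies in $K^\C(D\cap A)$, that is, that $K^\C(D\cap A)$ is $\pi$-saturated. Property (iii) is the soft part: given an $L$-equivariant holomorphic map $g\colon D\cap A\to Y$ into a complex $K^\C$-space $Y$, one sets $\tilde g(k\cdot p)=k\cdot g(p)$ for $k\in K^\C$ and $p\in D\cap A$, and checks well-definedness, holomorphy and $K^\C$-equivariance locally by analytic continuation, exploiting that $K$ is totally real of full real dimension in $K^\C$, so that the relations forcing equivariance, which hold for $k\in K$ by hypothesis, propagate to all of $K^\C$; uniqueness of the extension follows the same way. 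The main obstacle I expect is the second paragraph: turning the \'etale map $\mu$ into an open embedding onto a $\pi$-saturated set, and then pinning down precisely which small $L$-invariant neighbourhoods $D$ keep $K^\C(D\cap A)$ saturated, since this is where the holomorphic slice, the categorical quotient, and the distinction between $K$ and $K^\C$ all have to be handled together.
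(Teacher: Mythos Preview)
The paper does not give its own proof of this theorem: it is stated as a known result, with references to \cite{Snow}, \cite[\S 5.5, \S 6.3]{Heinzner1991}, and \cite[p.~331]{Heinzner-Kutzschebauch} just before the statement. So there is no in-paper argument to compare against.

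Your sketch follows the standard route taken in those references: equivariantly embed a saturated Stein neighbourhood into a $K^\C$-module, take an affine $L^\C$-transversal to the orbit, form the action map $\mu\colon K^\C\times^{L^\C}\Sigma\to X$, check it is a local biholomorphism along the zero section, and then invoke the Luna-type fundamental lemma to make it an open embedding onto a $\pi$-saturated set. That is exactly the content of Snow's and Heinzner's proofs, and your identification of the ``étale-to-open-embedding'' step as the crux is correct.

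Two places where your outline is thinner than the cited proofs and would need more care if written out in full: first, in (iii), the well-definedness of $\tilde g(k\cdot p)=k\cdot g(p)$ requires knowing that if $p_1,p_2\in D\cap A$ lie in the same $K^\C$-orbit, then $p_2=\ell p_1$ for some $\ell\in L^\C$ \emph{and} $g(\ell p_1)=\ell g(p_1)$; the first uses the slice isomorphism $U\cong K^\C\times^{L^\C}\Sigma$, and the second needs an analytic-continuation argument since $D\cap A$ is only $L$-invariant, not $L^\C$-invariant, so ``$L$-equivariant holomorphic implies $L^\C$-equivariant'' has to be formulated on the correct domain. Second, for (ii), the passage from an $L^\C$-saturated neighbourhood in the slice quotient to an $L$-invariant $D$ with $K^\C(D\cap A)$ $\pi$-saturated is precisely what \cite[p.~331]{Heinzner-Kutzschebauch} supplies; your appeal to the Kempf--Ness deformation is the right instinct but would need to be made precise. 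None of this is a genuine gap in strategy---it is the expected level of detail one defers to the cited sources.
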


A locally closed $L$-invariant subvariety of $X$ through $x\in R$ with the above properties of $D\cap A$ is called a slice at $x$ in $X$, or a \textit{Luna slice} (to acknowledge the earlier, algebraic version of the theorem due to Luna \cite{Luna}; in the literature, this name is sometimes attached to $L^\C(D\cap A)$).

A $K$-bundle $E$ on $X$ with fibre $F$ is \textit{equivariantly locally trivial} if it has the simplest local structure that we can reasonably expect, namely, locally over $X$, $E$ is induced from a trivial bundle with a diagonal action.  More precisely, every point $x$ in $X$ has a slice $S$ such that the restriction of $E$ to the tube $K S$ is $K$-equivariantly homeomorphic to the $K$-bundle $K\times^{K_x}(S\times F)\to K\times^{K_x}S$, where $K_x$ acts diagonally on $S\times F$.  Topological equivariant local triviality does not always hold.  In our theorem, the hypotheses that make it true are complex-analytic.  Note that in the parameter space $Y$, on which $K$ acts trivially, a slice at a point is simply a neighbourhood of the point.

\begin{theorem}  \label{thm:local.triviality}
Let a compact Lie group $K$ act continuously on a Stein space $X$ by biholomorphisms.  (The action $K\times X\to X$ is then automatically real-analytic.)  Let $Y$ be a topological space with a trivial $K$-action.  Let $E\to X\times Y$ be a locally trivial bundle, whose fibre $F$ is a complex manifold and whose structure group is a Lie subgroup $B$ of the biholomorphism group of $F$.  Let $K$ act continuously on $E$ by $B$-bundle maps.

Then $E$ is $K$-equivariantly locally trivial in the following sense.  Let $x\in X$ have stabiliser $L=K_x$, let $S$ be a slice for the $K$-action at $x$, and let $y\in Y$.  There is a neighbourhood $V$ of $y$ in $Y$ such that after possibly shrinking $S$, the restriction of $E$ to the $K$-invariant neighbourhood $K S\times V$ of $(x,y)$ is $K$-equivariantly homeomorphic to the $K$-bundle 
\[K\times^L((S\times V)\times F)\to K\times^L(S\times V), \]
where $L$ acts diagonally on $(S\times V)\times F$.  If $Y$ is a cube, then we can take $V=Y$.
\end{theorem}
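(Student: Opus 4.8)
\smallskip
\noindent\textit{Proof plan.}
The plan is to reduce the assertion, over the tube $KS\times V=K\times^L(S\times V)$, to the triviality of $E|_{S\times V}$ as an $L$-bundle, and then to obtain that triviality by $L$-equivariantly contracting the slice $S$ to $x$ and trivialising over the central fibre $\{x\}\times V$. For the reduction I would invoke the standard correspondence between $K$-equivariant locally trivial bundles with structure group $B$ over a tube $K\times^L Z$ and $L$-equivariant such bundles over $Z$: restriction to $Z=\{e\}\times^L Z$ and the associated-bundle construction $E_Z\mapsto K\times^L E_Z$ are mutually inverse, and $K\times^L((S\times V)\times F)$ corresponds to $(S\times V)\times F$ with the diagonal $L$-action. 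Thus it suffices to produce, after shrinking $S$ and choosing a neighbourhood $V$ of $y$ (with $V=Y$ when $Y$ is a cube), an $L$-equivariant homeomorphism of $E|_{S\times V}$ with $(S\times V)\times F$ over $S\times V$, where $L$ acts on $S$ as given, trivially on $V$, and on $F$ through the representation $\rho\colon L\to B$ that it induces on the fibre $E_{(x,y)}=F$.

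First I would put the slice into a standard form. Since $K$ acts on $X$ by biholomorphisms, its action extends to a holomorphic $K^\C$-action, and the holomorphic slice theorem (Theorem~\ref{thm:holomorphic.slice}) identifies a $K^\C$-invariant neighbourhood of $x$ in $X$, $K^\C$-equivariantly, with a subvariety $A$, containing the zero section, of a neighbourhood of the zero section of $N=K^\C\times^{L^\C}V$. Restricting to the $K$-action and using the decomposition $K^\C/L^\C\cong K\times^L(i\lie m)$, where $\lie m$ is an $L$-invariant complement of $\lie l$ in $\lie k$, one sees that $N$, as a $K$-space, is the $K$-vector bundle $K\times^L(i\lie m\oplus V)$ over $K/L=Kx$; a small neighbourhood of the origin in the fibre over the base point, intersected with $A$, is then a slice for the $K$-action at $x$. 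Since slices at a point agree after shrinking, I may replace the given $S$ by this one. As an $L$-space, $S$ is thus $L$-homeomorphic to a real-analytic subset $W$ of a ball in the $L$-module $i\lie m\oplus V$, containing the origin as a fixed point. By the local conical structure of real-analytic sets, applied $L$-equivariantly — legitimate because the $L$-action on the ambient module is linear — for $W$ small enough there is an $L$-equivariant strong deformation retraction of $W$, hence of $S$, onto the origin.

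The $S$-contraction, crossed with $\id_V$, gives an $L$-equivariant homotopy from $\id_{S\times V}$ to $(s,v)\mapsto(x,v)$, so by the equivariant homotopy invariance theorem (Theorem~\ref{thm:homotopy.theorem}), $E|_{S\times V}$ is $L$-equivariantly isomorphic to the pullback along $S\times V\to\{x\}\times V$ of $E|_{\{x\}\times V}$. It then remains to trivialise $E|_{\{x\}\times V}$ $L$-equivariantly as $V\times F$ with $L$ acting via $\rho$. Local triviality of $E$ over $X\times Y$ gives, after shrinking $V$ (not needed when $Y$ is a cube, since $\{x\}\times Y$ is then contractible), a non-equivariant trivialisation $E|_{\{x\}\times V}\cong V\times F$ in which the $L$-action has the form $(v,f)\mapsto(v,\rho_v(\ell)f)$ with $\rho_v\in\Hom(L,B)$ continuous in $v$ and $\rho_y=\rho$. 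Here the Lie group structure of $B$ is used: by rigidity of homomorphisms of the compact group $L$ into a Lie group, the $B$-conjugacy class of $\rho$ is an open neighbourhood of $\rho$ in $\Hom(L,B)$, and the orbit map $B\to B/Z_B(\rho(L))$ has a continuous local section $\tau$ near $\rho$ with $\tau(\rho)=e$; shrinking $V$ so that every $\rho_v$ lies in that neighbourhood, fibrewise conjugation by $b(v):=\tau(\rho_v)$, which is continuous in $v$ with $b(y)=e$, brings the action to $(v,f)\mapsto(v,\rho(\ell)f)$. When $Y$ is a cube, connectedness forces every $\rho_v$ into the conjugacy class of $\rho$ and contractibility of $Y$ provides a global continuous conjugator, so one may keep $V=Y$. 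Pulling back and reversing the reduction of the first paragraph yields the desired $K$-equivariant homeomorphism $E|_{KS\times V}\cong K\times^L((S\times V)\times F)$.

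The step I expect to be the main obstacle is the second one: extracting an $L$-equivariant contraction to $x$ from a merely topological slice. This is precisely where the complex-analytic hypotheses enter — through the holomorphic slice theorem, which realises the slice inside a linear $L$-module, and through the conical structure of analytic sets, which renders the possibly singular slice $L$-equivariantly contractible. A secondary point requiring care is the rigidity input for the fibre representation, and the bookkeeping of the successive shrinkings of $V$, including the use of compactness of $Y$ in the case of a cube, where $V=Y$ must be preserved.
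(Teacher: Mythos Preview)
Your reduction to $L$-equivariant triviality of $E|_{S\times V}$ over the slice is correct and matches the paper. The problem is in the second step: you invoke Theorem~\ref{thm:homotopy.theorem} (equivariant homotopy invariance) to transport $E|_{S\times V}$ to its restriction over $\{x\}\times V$, but in this paper Theorem~\ref{thm:homotopy.theorem} is \emph{deduced from} Theorem~\ref{thm:local.triviality}; see the last sentence of Section~\ref{sec:proofs}. Concretely, the proof of Theorem~\ref{thm:constantG} takes equivariant local triviality of the bundle as a hypothesis, and Theorem~\ref{thm:local.triviality} is exactly what is meant to supply it. So the argument is circular as written. (A secondary issue: your appeal to the holomorphic slice theorem assumes a $K^\C$-action on $X$, whereas Theorem~\ref{thm:local.triviality} only assumes a $K$-action; complexification produces a $K^\C$-action on $X^\C$, not on $X$ itself.)

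The paper breaks the circle by a direct averaging argument that uses no homotopy input. The key is Proposition~\ref{prp:box.coordinates}: near an $L$-fixed point $v_0$ of a bundle whose structure group is a complex Lie subgroup of the biholomorphism group of the fibre, linearise the fibre at $v_0$ (so $L$ acts linearly there), take any non-equivariant local trivialisation, and average it over $L$ using Haar measure; the result is an $L$-equivariant local trivialisation. In Proposition~\ref{prp:locally.trivial} this is applied not to $E$ itself but to the \emph{isomorphism bundle} between $E|_{T_0\times S}$ and the product bundle $T_0\times S\times F_{(t_0,x)}$ carrying the diagonal $L$-action. That bundle has fibre $B$, and the identity element over the base point $(t_0,x)$ is an $L$-fixed point, so Corollary~\ref{cor:section} yields a local $L$-equivariant section, i.e.\ the desired $L$-isomorphism. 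This single averaging step does the work of both your $L$-equivariant contraction of the slice and your rigidity argument for $\Hom(L,B)$; the cube case is then handled by the patching argument of Lemma~\ref{lem:one}.
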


Theorem \ref{thm:local.triviality} allows us to prove a homotopy invariance theorem.  Such a theorem is well known and fundamental in the theory of ordinary principal bundles, but, to our knowledge, does not exist in the literature for generalised principal bundles.  We give a proof in Section~\ref{sec:proofs}.

Let $X$ be a space and $E$ be a bundle of some kind on $X\times I$, where $I=[0,1]$.  We say that $E$ is \textit{isomorphic to a constant bundle} if $E$ is isomorphic, in the relevant category, to the bundle $p^*E$, where $p:X\times I\to X\times I$, $(x,t)\mapsto(x,0)$.  Then the bundles $E\vert_{X\times\{t\}}$, $t\in I$, viewed as bundles on $X$, are mutually isomorphic.

\begin{theorem}  \label{thm:homotopy.theorem}
Let a compact Lie group $K$ act real-analytically on a Stein space $X$ by biholomorphisms, and trivially on $I$.  Let $G$ be a complex Lie group and $\G$ be a topological group bundle on $X\times I$ with fibre $G$, whose structure group $A$ is a Lie subgroup of the Lie automorphism group of $G$.  Let $K$ act continuously on $\G$ by group $A$-bundle maps.

{\rm (a)}  Then $\G$ is isomorphic to a constant bundle.

{\rm (b)}  Let $P$ be a topological principal $K$-$\G$-bundle on $X\times I$.  (It is implicit that the action map $P\times_X \G\to P$ is $K$-equivariant.)  By {\rm (a)}, we may take $\G$ to be constant.  Then $P$ is isomorphic to a constant bundle.  Hence, once we identify the bundles $\G\vert_{X\times\{t\}}$, $t\in I$, with a topological group $K$-bundle $\G_0$ on $X$, the topological principal $K$-$\G_0$-bundles $P\vert_{X\times\{t\}}$, $t\in I$, are mutually isomorphic.
\end{theorem}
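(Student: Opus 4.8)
The plan is to follow the classical proof of homotopy invariance for principal bundles, but carried out equivariantly and using the equivariant local triviality theorem (Theorem \ref{thm:local.triviality}) as the replacement for ordinary local triviality. The core of the argument is a local-to-global patching: one shows that over a neighbourhood of $X\times\{t_0\}$ (for any fixed $t_0\in I$) the bundle is pulled back from $t=t_0$, and then one glues these local trivialisations along $I$ using a partition of unity on $I$ (not on $X$), so no Steinness or cohomology-vanishing on $X$ is needed for this step. For part (a): $\G$ is a group $A$-bundle on $X\times I$, so by Theorem \ref{thm:local.triviality} (with $Y=I$, $F=G$, $B=A$, and $K$ acting by $A$-bundle maps) every point of $X$ has a slice $S$ over which $\G\vert_{KS\times I}$ is $K$-isomorphic to $K\times^L((S\times I)\times G)$ with $L$ acting diagonally. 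Since $I$ is a cube we may take the full interval $I$ as the neighbourhood in the parameter direction; hence over each tube $KS$ the bundle is already constant in $t$. Now cover $X$ by such tubes $\{KS_j\}$; on overlaps the two constant-in-$t$ trivialisations differ by a transition function into $A$ which may itself depend on $t$. The usual trick is to build the isomorphism $\G\cong p^*\G$ tube by tube: order the tubes, and over $KS_1\cup\dots\cup KS_m$ define the isomorphism to $t=0$ by modifying the one already constructed over $KS_1\cup\dots\cup KS_{m-1}$ using an equivariant cutoff function supported near $KS_m$ in $X$ — but the cleaner route, which avoids infinite gluing subtleties, is the standard one: it suffices to construct, for each $t_0\in I$, an isomorphism over $X\times[t_0-\varepsilon,t_0+\varepsilon]$ to the constant bundle, and then compose finitely many such over a finite subdivision $0=t_0<t_1<\dots<t_N=1$ of $I$. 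The local-in-$t$ statement over $X\times[t_0-\varepsilon,t_0+\varepsilon]$ is exactly what the tube-wise constancy gives, after noting that an isomorphism of bundles over $X\times\{t_0\}$, known tube-by-tube, patches globally because the category of group $K$-bundles with a fixed structure group is a stack and the $KS_j$ cover $X$ — concretely, one checks the tube-wise isomorphisms agree on overlaps up to a coboundary and adjusts, or simply observes that "constant in $t$" is a condition checkable locally on $X$ and stable under regluing. Composing the finitely many local-in-$t$ isomorphisms yields $\G\cong p^*\G$ globally on $X\times I$.

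For part (b): once $\G$ is identified with a constant bundle $\G_0\times I$ (abusing notation, pulling back $\G_0$ from $X$), apply Theorem \ref{thm:local.triviality} again, now to $P$ with fibre $G$ and structure group $A\ltimes G$ — the discussion preceding Theorem \ref{thm:main.result} records that a principal $K$-$\G$-bundle, viewed as a fibre bundle with fibre $G$, has structure group $A\ltimes G$ and that $K$-equivariance of the action map $P\times_X\G\to P$ is equivalent to $K$ acting by $A\ltimes G$-bundle maps. So the hypotheses of Theorem \ref{thm:local.triviality} are met with $B=A\ltimes G$, and $P$ is $K$-equivariantly locally trivial over tubes, again with the full interval $I$ in the parameter direction since $I$ is a cube. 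Thus over each tube $KS_j$, $P\vert_{KS_j\times I}$ is constant in $t$; repeat the local-in-$t$-then-compose argument from part (a), with the one extra bookkeeping item that the isomorphisms must be $\G_0$-equivariant, i.e. genuine isomorphisms of principal $\G_0$-bundles and not merely of the underlying fibre bundles. This follows because the tube-wise trivialisations furnished by Theorem \ref{thm:local.triviality} are as $B$-bundle maps with $B=A\ltimes G$, which is precisely the statement that they respect the right $\G_0$-action. The conclusion is that $P\cong p^*P$ as principal $K$-$\G_0$-bundles on $X\times I$, whence the restrictions $P\vert_{X\times\{t\}}$ are mutually isomorphic principal $K$-$\G_0$-bundles on $X$.

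The main obstacle is the patching of the tube-wise trivialisations into a global isomorphism of bundles on $X$, i.e. passing from "$E$ is constant in $t$ over each tube $KS_j$" to "$E$ is globally isomorphic to $p^*E$." The subtlety is genuinely present only if the cover $\{KS_j\}$ is infinite, which it may be since $X$ need not be compact; one must argue that the infinite gluing converges, and the standard device is to do it over a countable locally finite cover, building the isomorphism by an induction that modifies the previous stage only on the support of the next equivariant cutoff function. One should be careful that the equivariant cutoff functions exist — they do, since $X/K$ is paracompact and one can average a partition of unity subordinate to the $KS_j$ over $K$ — and that the resulting maps really are continuous and $\G_0$-equivariant on overlaps. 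An alternative that sidesteps the infinite gluing entirely: reduce to the local-in-$t$ statement, prove that statement by the tube argument (which is where the real content is), and then compose finitely many maps along a subdivision of $I$; in the local-in-$t$ statement the "gluing over $X$" is just the observation that an isomorphism known to exist locally on $X$, in a category that forms a stack, exists globally — this is formal once one has set up the bundles correctly. I expect the write-up to spend most of its length making this stack/patching point precise and verifying the $A\ltimes G$-equivariance is preserved throughout.
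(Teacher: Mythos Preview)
Your overall strategy matches the paper's: invoke Theorem \ref{thm:local.triviality} (using that $I$ is a cube) to get tube-wise constancy in $t$, then patch globally over $X$; and for part (b), use the $A\ltimes G$ structure on $P$ so that $K$ acts by $B$-bundle maps with $B=A\ltimes G$, reducing to the same situation. That is exactly what the paper does.

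The gap is in the global patching. Your alternative route --- reduce to a local-in-$t$ statement over $X\times[t_0-\varepsilon,t_0+\varepsilon]$ and then compose along a subdivision of $I$ --- does not sidestep the difficulty: for each subinterval you still need constancy over \emph{all} of $X$, and your justification that ``an isomorphism known to exist locally on $X$, in a category that forms a stack, exists globally'' is simply false. Isomorphisms glue only when they agree on overlaps; here the tube-wise isomorphisms to $p^*P$ differ on overlaps by an automorphism-valued cocycle, and there is no reason for it to be a coboundary. So this alternative, as stated, fails.

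The paper (following the classical non-equivariant argument) resolves the patching not by splitting any cocycle but by exploiting the $I$-direction directly. Having fixed a locally finite cover $(U_j)$ by tubes with $P\vert_{U_j\times I}\simeq \G\vert_{U_j}\times I$, choose $K$-invariant continuous $\rho_j:X\to I$ supported in $U_j$ with $\sup_j\rho_j\equiv 1$, and on each tube let $h_j$ be the bundle endomorphism induced by $(g(x),t)\mapsto(g(x),\max\{\rho_j(x),t\})$. Each $h_j$ is the identity off $U_j\times I$, so after totally ordering $J$ the pointwise-finite composite $h$ is a well-defined continuous $K$-map $P\to P$ whose image lies in $P_1=P\vert_{X\times\{1\}}$, yielding $P\simeq P_1\times I$. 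Your ``inductive cutoff'' sketch is in the right spirit but never supplies this mechanism: the cutoff $\rho_j$ must be used to reparametrise in $t$, not to interpolate between competing fibre isomorphisms, and without that device the induction step you describe does not actually go through.
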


Finally, we use the theorem to prove the following useful result.

\begin{proposition}  \label{prp:topological.fact}
Let a compact Lie group $K$ act real-analytically on a Stein space $X$ by biholomorphisms.  Let $G$ be a complex Lie group and $\G$ be a topological group bundle on $X$ with fibre $G$, whose structure group $A$ is a Lie subgroup of the Lie automorphism group of $G$.  Let $K$ act continuously on $\G$ by group $A$-bundle maps.

Let $E$ be a topological $K$-$\G$-bundle on $X$ (not necessarily homogeneous).  The restriction map from the space of continuous $K$-sections of $E$ over $X$ to the space of continuous $K$-sections of $E$ over $R$ is a homotopy equivalence.
\end{proposition}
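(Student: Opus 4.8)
The plan is to deduce the proposition from the equivariant homotopy invariance of continuous $K$-$\G$-bundles, namely Theorem~\ref{thm:homotopy.theorem}, combined with the fact (Theorem~\ref{thm:HK-retraction}) that the Kempf--Ness set $R$ is a $K$-equivariant strong deformation retract of $X$ whose deformation $\rho:X\times I\to X$ preserves the closures of $K^\C$-orbits. First I would observe that $\rho$ is $K$-equivariant and real-analytic in the sense needed, so the pullback $\rho^*E$ is a topological $K$-$\rho^*\G$-bundle on $X\times I$, with $\rho^*\G$ a topological group $K$-bundle on $X\times I$ with structure group $A$. Applying Theorem~\ref{thm:homotopy.theorem}(a) to $\rho^*\G$ and then Theorem~\ref{thm:homotopy.theorem}(b) to $\rho^*E$, we get that $\rho^*E$ is isomorphic, as a topological $K$-$\G$-bundle, to the constant bundle $p^*(E)$ where $p(x,t)=(x,0)$; that is, $\rho^*E\cong \mathrm{pr}_X^*E$ where $\mathrm{pr}_X:X\times I\to X$. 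Restricting this isomorphism to $X\times\{0\}$ gives $E\vert_{X\times\{0\}}\cong E$ (the identity, if $\rho_0=\mathrm{id}_X$) and to $X\times\{1\}$ gives a $K$-isomorphism $\rho_1^*E\cong E$, i.e.\ an isomorphism between $E$ and the pullback of $E$ under the retraction $r=\rho_1:X\to R$ followed by the inclusion. Equivalently, the whole bundle $E$ over $X$ is $K$-isomorphic to $r^*(E\vert_R)$, compatibly with the structure group bundle.

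With this identification in hand, I would set up the two restriction maps explicitly. Let $\Gamma(E)^K$ and $\Gamma(E\vert_R)^K$ denote the spaces of continuous $K$-sections of $E$ over $X$ and over $R$, with the compact-open topology, and let $\mathrm{res}:\Gamma(E)^K\to\Gamma(E\vert_R)^K$ be restriction. Using $E\cong r^*(E\vert_R)$, pullback along $r$ gives a continuous map $r^*:\Gamma(E\vert_R)^K\to\Gamma(E)^K$, because a continuous $K$-section $s$ of $E\vert_R$ over $R$ pulls back to the $K$-section $x\mapsto (x, s(r(x)))$ of $r^*(E\vert_R)\cong E$, and this operation is continuous for the compact-open topologies since $r$ is a fixed continuous map. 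One composite, $\mathrm{res}\circ r^*$, is the identity on $\Gamma(E\vert_R)^K$ because $r$ restricts to the identity on $R$. For the other composite $r^*\circ\mathrm{res}$, I would produce a homotopy to the identity of $\Gamma(E)^K$ by using the full deformation $\rho$: the trivialisation $\rho^*E\cong \mathrm{pr}_X^*E$ lets us transport, for each $t\in I$, a section $\sigma$ of $E$ to the section $\sigma_t:=(\text{restrict the trivialisation to }X\times\{t\})^{-1}\circ(\sigma\circ\rho_t)$, and $t\mapsto\sigma_t$ is a continuous path from $\sigma$ (at $t=0$) to $r^*\mathrm{res}(\sigma)$ (at $t=1$), depending continuously on $\sigma$; hence $(\sigma,t)\mapsto\sigma_t$ is the desired homotopy $\Gamma(E)^K\times I\to\Gamma(E)^K$. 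This exhibits $\mathrm{res}$ as a homotopy equivalence with homotopy inverse $r^*$.

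The main obstacle I anticipate is the careful bookkeeping in the previous paragraph: one must check that the isomorphism $\rho^*E\cong\mathrm{pr}_X^*E$ furnished abstractly by Theorem~\ref{thm:homotopy.theorem} can be used to define the transport operation $\sigma\mapsto\sigma_t$ in a way that is genuinely continuous in $(\sigma,t)$ jointly, and that it is $K$-equivariant throughout, so that it really lands in the space of $K$-sections. There is also a minor technical point about whether $\rho_0$ equals $\mathrm{id}_X$ on the nose or merely up to $K$-homotopy; in the former case (which is the content of a strong deformation retraction) the argument is clean, and in any case the extra homotopy is harmless. Once continuity and equivariance of the transport are confirmed, the conclusion follows formally, and no analytic input beyond Theorems~\ref{thm:HK-retraction} and~\ref{thm:homotopy.theorem} is needed — in particular, Steinness and the Oka-theoretic machinery play no role here, only the topological homotopy invariance over the retraction $X\to R$.
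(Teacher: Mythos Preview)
Your approach is essentially the same as the paper's: use the strong deformation retraction from Theorem~\ref{thm:HK-retraction}, pull the bundle back along the homotopy, apply Theorem~\ref{thm:homotopy.theorem} to trivialise the pullback over $X\times I$, conclude that $E\cong r^*(E\vert_R)$, and then verify that restriction and $r^*$ are homotopy inverse. Your write-up of the final step (the explicit transport $\sigma\mapsto\sigma_t$) is in fact more detailed than the paper's, which simply asserts that $\iota^*$ and $\rho^*$ are homotopy inverse once $E$ is replaced by $\rho^*\iota^*E$.

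One small point: Theorem~\ref{thm:homotopy.theorem}(b) is stated for \emph{principal} $K$-$\G$-bundles, whereas $E$ is an arbitrary $K$-$\G$-bundle. The paper handles this by passing to the associated principal $K$-$\G$-bundle $P$, applying Theorem~\ref{thm:homotopy.theorem} to $\phi^*P$, and then observing that the resulting isomorphism $P\cong\rho^*\iota^*P$ induces $E\cong\rho^*\iota^*E$. You should insert this intermediate step rather than invoking part~(b) directly on $\rho^*E$. Also, your aside about $\rho$ needing to be real-analytic is unnecessary: the hypotheses of Theorem~\ref{thm:homotopy.theorem} concern the $K$-action on the Stein space $X$, not the retraction, and only continuity of the homotopy is required to form the topological pullback bundle.
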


\begin{proof}
Let $\rho:X\to R$ be a strong deformation retraction and $\iota:R\to X$ be the inclusion.  Let $\phi:X\times I\to X$ be a homotopy from $\id_X$ to $\iota\circ\rho$ relative to $R$.  Let $P$ be the principal $K$-$\G$-bundle associated to $E$.  Theorem \ref{thm:homotopy.theorem} applied to the principal bundle $\phi^*P$ shows that the bundles $P$ and $\phi^*\iota^*P$ are $K$-isomorphic, so $E$ and $\phi^*\iota^*E$ are $K$-isomorphic as well.   Then we replace $E$ by $\phi^*\iota^* E$, note that a section of the latter is nothing but a lifting of $\rho$ by the projection $\iota^*E\to R$, and use $\phi$ to show that the maps $\iota^*$ and $\rho^*$ between the space of liftings and the space of sections of $\iota^*E=E\vert_R$ are homotopy inverse to each other.
\end{proof}

The relationship between Theorems \ref{thm:main.result} and \ref{thm:HK-strengthened} is as follows.  First we use Proposition \ref{prp:topological.fact} with $E=\Iso(P_1,P_2)$ to obtain the first statement of Theorem \ref{thm:HK-strengthened} from Theorem \ref{thm:HK-classification}(b).  This is used in the first part of the proof of Theorem \ref{thm:main.result}.  The second part of the proof of Theorem \ref{thm:main.result} then yields the second statement of Theorem \ref{thm:HK-strengthened}.  Theorem \ref{thm:HK-NHC} is a crucial ingredient in the second part of the proof of Theorem \ref{thm:main.result}.

\section{Proofs of the main theorems}  \label{sec:proofs}

\noindent
This section contains the proofs of Theorems \ref{thm:main.result},  \ref{thm:local.triviality}, and \ref{thm:homotopy.theorem}.

\subsection{Proof of the equivariant parametric Oka principle (Theorem \ref{thm:main.result})}
First we prove that the inclusion $\Gamma_\O(E)^K\hookrightarrow\Gamma_\mathscr C(E)^K$ induces a surjection of path components.  Let $P$ be the holomorphic principal $K^\C$-$\G$-bundle associated to $E$.  Take a continuous $K$-section $s$ of $E$ over $X$.  The preimage in $P$ of its image in $E$ is a topological principal $K$-$\H$-subbundle $Q$ of $P$.  We have a topological $K$-isomorphism $\sigma:Q\times^\H\G \to P$.  By Theorem \ref{thm:HK-classification}(a), $Q$ is topologically $K$-isomorphic to a holomorphic principal $K^\C$-$\H$-bundle $Q'$.  Choose a topological $K$-isomorphism $Q'\to Q$ and let $\tau:Q'\times^\H\G \to Q\times^\H\G$ be the induced isomorphism.  

By Theorem \ref{thm:HK-classification}(b) and Proposition \ref{prp:topological.fact}, the topological $K$-isomorphism $\sigma\circ\tau:Q'\times^\H\G \to P$ can be deformed to a holomorphic $K$-isomorphism over $X$.  Applying the deformation to $Q'$, viewed as a subbundle of $Q'\times^\H\G$, gives a deformation of $Q$ through topological principal $K$-$\H$-subbundles of $P$ to a holomorphic principal $K^\C$-$\H$-subbundle.  Pushing down to $E$ yields a deformation of $s$ through continuous $K$-sections of $E$ to a holomorphic section.

Now let $B$ be the closed unit ball in $\R^k$, $k\geq 1$, and let $\alpha_0:B\to \Gamma_\mathscr C(E)^K$ be a continuous map taking the boundary sphere $\partial B$ into $\Gamma_\O(E)^K$.  Choose a base point $b_0\in \partial B$.  We shall prove that there is a deformation $\alpha:B\times I\to\Gamma_\mathscr C(E)^K$ of $\alpha_0=\alpha(\cdot,0)$ with $\alpha_t(b_0)=\alpha_0(b_0)$ and $\alpha_t(\partial B)\subset\Gamma_\O(E)^K$ for all $t\in I$, and $\alpha_1(B)\subset\Gamma_\O(E)^K$.  This implies that the inclusion $\Gamma_\O(E)^K\hookrightarrow\Gamma_\mathscr C(E)^K$ induces a $\pi_{k-1}$-monomorphism and a $\pi_k$-epimorphism.

Consider the holomorphic group $K^\C$-bundle $\Aut P$ of principal $\G$-bundle automorphisms of $P$.  We seek a global $K$-equivariant $NHC$-section $\gamma_0$ of $\Aut P$ (with $C=B$, $H=\partial B$, $N=\{b_0\}$)  such that for every $b\in B$, $\gamma_0(b)$, by its left action on $E$, maps $\alpha_0(b_0)$ to $\alpha_0(b)$, over $X$ if $b\in\partial B$ but only over $R$ if $b\in B\setminus \partial B$.

\smallskip\noindent
\textbf{Claim.}  On a sufficiently small saturated neighbourhood of each point of $X$, that is, locally over $X\git K^\C$, such an $NHC$-section exists.
\smallskip

We will accept the claim for the moment, complete the proof of the theorem, and then prove the claim.

On the intersection of two such saturated neighbourhoods, two such $NHC$-sections differ by a $K$-equivariant $NHC$-section of the holomorphic group $K^\C$-bundle $\A$ of principal $\G$-bundle automorphisms of $P$ that fix $\alpha_0(b_0)$.  Gluing these local $NHC$-sections together to produce $\gamma_0$ amounts to splitting a cocycle, and the cocycle does split by Theorem \ref{thm:HK-NHC}(c) applied to~$\A$.

By Theorem \ref{thm:HK-NHC}(a) applied to $\Aut P$, we can deform $\gamma_0$ through $K$-equivariant $NHC$-sections $\gamma_t$ of $\Aut P$, $t\in I$, to the identity section.  Let $\alpha_t(b)$ be the section of $E$ obtained by letting $\gamma_t(b)$ act on $\alpha_0(b_0)$.  For $b\in B\setminus \partial B$ and $t\in(0,1)$, $\alpha_t(b)$ is only defined over $R$.  Thus we have a deformation $\alpha:B\times I\to\Gamma_\mathscr C(E\vert_R)^K$, such that $\alpha$ factors through $\Gamma_\mathscr C(E)^K$ on $B\times\{0\}$ and through $\Gamma_\O(E)^K$ on $\partial B\times I\cup B\times\{1\}$, in such a way that $\alpha_t(b_0)$ is fixed and $\alpha_1$ takes all of $B$ to $\alpha_0(b_0)$.

We could finish the proof by showing that the following commuting square has a continuous lifting.  We have renamed the spaces for convenience.
\[ \xymatrix{
A_0=\partial B\times I\cup B\times\{0,1\} \ar^{\phantom{mmmmm}\beta}[r] \ar^j[d] & \Gamma_\mathscr C(E)^K=Y \ar^p[d]  \\ A= B\times I \ar^\alpha[r] \ar@{-->}[ur] & \Gamma_\mathscr C(E\vert_R)^K=Z
} \]
In fact, it suffices to show that $\alpha$ can be deformed, keeping $\beta$ fixed and the square commuting, until a lifting exists.

As the inclusion of a subpolyhedron in a polyhedron, $j$ is a cofibration.  The precomposition maps
\[ j_Y^*:\mathscr C(A,Y)\to\mathscr C(A_0,Y), \qquad j_Z^*:\mathscr C(A,Z)\to\mathscr C(A_0,Z) \]
are Hurewicz fibrations \cite[Theorem 10]{Strom1968}.  Since the restriction map $p$ is a homotopy equivalence by Proposition \ref{prp:topological.fact}, the postcomposition maps
\[ p_*:\mathscr C(A,Y)\to\mathscr C(A,Z),  \qquad  p_*:\mathscr C(A_0,Y)\to\mathscr C(A_0,Z)  \]
are homotopy equivalences.  Consider the fibres $F_Y=(j_Y^*)^{-1}(\beta)$ and $F_Z=(j_Z^*)^{-1}(p\circ\beta)$.  By the long exact sequence of homotopy groups for a fibration, the map $p_*:F_Y\to F_Z$ is a weak homotopy equivalence; in particular it induces a surjection of path components.  Hence, $\alpha\in F_Z$ can be deformed within $F_Z$ to a map in $p_*(F_Y)$, q.e.d.

We now turn to the \textbf{proof of the claim}.  We want a lifting in the following diagram locally over each point in the categorical quotient $Q=X\git K^\C$,
\[ \xymatrix{
 & \Aut P \ar[d] \\ B\times X \ar_{\!\!\!\!\!\alpha_0}[r] \ar@{-->}[ur] &  E=P/\H
} \]
where the vertical map $\gamma \mapsto \gamma \alpha_0 (b_0)$ uses the action of $\Aut P$ on $E$.  Since $\alpha_0 (b_0)$ is a holomorphic $K^\C$-equivariant section, the map is $K^\C$-equivariant.  In fact, $\Aut P \to E $ is a locally trivial holomorphic $K^\C$-bundle with fibre $H$. 

Let us describe our aim more explicitly.  Recall that we chose a real-analytic $K$-invariant strictly plurisubharmonic exhaustion function on $X$, whose Kempf-Ness set $R$ is a $K$-equivariant continuous strong deformation retract of $X$, such that the deformation preserves the closure of each $K^\C$-orbit, so it preserves $\pi^{-1}(q)$ for each $q\in Q$.  Here, $\pi:X \to Q$ is the categorical quotient map.  For each point in $Q$, we want a neighbourhood $\Omega$ with a $K$-equivariant continuous lifting $(\partial B \times \pi^{-1} (\Omega)) \cup (B \times (\pi^{-1} (\Omega )\cap R)) \to \Aut P $, which is holomorphic and thus $K^\C$-equivariant when restricted to sets of the form $\{b\} \times \pi^{-1} (\Omega)$, $b \in \partial B$.  On $\{b_0\} \times X$, we have a lifting given by the identity section of $\Aut P$, which we wish to keep unchanged.

By pulling back the bundle $\Aut P \to E $ by $\alpha_0$, our problem transforms to finding a section, local in the above sense, of a topological $K$-bundle $Z$ over $B\times X$, which is a holomorphic $K^\C$-bundle when restricted to sets of the form $\{b\}\times X$, $b\in\partial B$.  We proceed in three steps.
\begin{enumerate}
\item  Produce a continuous section $\sigma_0$ over $B\times(\pi^{-1}(\Omega)\cap R)$.
\item  Deform $\sigma_0\vert_{\partial B\times(\pi^{-1}(\Omega)\cap R)}$ to a section that extends continuously to a section $\sigma_1$ on $\partial B\times\pi^{-1}(\Omega)$ that is holomorphic in the second variable. 
\item  Extend $\sigma_1$ continuously across $B\times(\pi^{-1}(\Omega)\cap R)$.
\end{enumerate}

Let $q\in Q$ and $x_0\in\pi^{-1}(q) \cap R$.  Let $L=K_{x_0}$ be the stabiliser of $x_0$.  Let $S_\R$ be a topological slice for the $K$-action on $X$ at $x_0$, that is, $S_\R$ is a locally closed $L$-invariant subset of $X$ containing $x_0$, such that the $K$-invariant neighbourhood $W= K S_\R $ of $x_0$ in $X$ is $K$-equivariantly homeomorphic to $K\times^L S_\R$.  By Theorem \ref{thm:local.triviality}, noting that $B$ is homeomorphic to a cube, we can shrink $S_\R$ so that  the bundle $Z$ is topologically $K$-isomorphic to 
\[ B \times (K \times^L (S_\R \times F_{(b_0, x_0) })) \to B\times (K\times^L S_\R), \] 
where the $L$-action on $S_\R \times F_{(b_0, x_0)}$ is diagonal.  Here, $ F_{(b_0, x_0)}$ is the fibre of $Z$ over $(b_0, x_0)$.  By the above, we have a $K$-section over a set containing $(b_0, x_0)$, which shows that $(b_0, x_0)$ is an $L$-fixed point.  Therefore $Z$ has an $L$-section representing the identity of $\Aut P$ in the coordinates given by the chosen trivialisation over $B \times S_\R$, which by slice theory can be extended to a continuous $K$-section $\sigma_0$ over $B\times W\simeq B \times (K\times^L S_\R)$.  Over $\{b_0\}\times W$, $\sigma_0$ represents the identity section of $\Aut P$. 

Now \cite[Corollary 1(i), (ii), p.~329]{Heinzner-Kutzschebauch} applied to the $K$-invariant subset $K x_0$ of $R$ provides a neighbourhood $\Omega$ of $q$ in $Q$ such that after replacing $W$ by $W\cap \pi^{-1}(\Omega)$, we have $\pi^{-1} (\Omega) \cap R =  W \cap R$.  Hence, $W \cap R$ is a Kempf-Ness set for $\pi^{-1} (\Omega)$.  This concludes the first step of the proof of the claim.

For the second step we use the proposition below.  Here we can replace $\partial B$ by any compact Hausdorff space $T$ with a trivial $K^\C$-action, and assume that we have a  topological $K^\C$-bundle $Z\to T \times X$ that is a holomorphic $K^\C$-bundle when restricted to sets of the form $\{t\} \times X$.  Now $X$ stands for $\pi^{-1}(\Omega)$.  The fibre $F$ of $Z$ is a complex manifold on which $K^\C$ acts by elements of a complex Lie subgroup of the biholomorphism group of $F$.  We have a $K$-invariant continuous section $\sigma_0$ over $T\times W$, where $W$ is a $K$-invariant neighbourhood of $R$.  For one point $t_0 \in T$, $\sigma_0$ extends to a  global holomorphic $K^\C$-equivariant section over $\{t_0\} \times X$.

\begin{proposition}
Every point in $Q$ has a neighbourhood $\Omega$ with a $K^\C$-equivariant continuous section $\sigma_1  : T \times \pi^{-1} (\Omega ) \to Z$, holomorphic when restricted to sets of the form $\{t\} \times \pi^{-1} (\Omega)$, along with a homotopy $\sigma_s$ of $K$-equivariant continuous sections of $Z$ over $T \times (\pi^{-1} (\Omega)\cap R)$ joining the restrictions of $\sigma_0$ and $\sigma_1$ to $T\times(\pi^{-1} (\Omega)\cap R)$.  Moreover, $\sigma_s = \sigma_0$ on $\{t_0\} \times X$ for all $s\in I$.
\end{proposition}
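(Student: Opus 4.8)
The plan is to reduce to a local situation near a fibre $\pi^{-1}(q)$, use the holomorphic slice theorem (Theorem~\ref{thm:holomorphic.slice}) to linearise the $K^\C$-action, and then exploit the extension property (iii) of a Luna slice to reduce the problem on $X$ to a problem on the slice, where $K^\C$-equivariance becomes mere $L$-equivariance and the base is an honest Stein space with a reductive group action. Concretely, pick $q\in Q$ and $x_0\in\pi^{-1}(q)\cap R$ with stabiliser $L=K_{x_0}$. By Theorem~\ref{thm:holomorphic.slice} there is a saturated Stein neighbourhood $K^\C(D\cap A)$ of $K^\C x_0$, with Luna slice $S=D\cap A$, so that after shrinking $\Omega$ we may assume $\pi^{-1}(\Omega)=K^\C S$ and $\pi^{-1}(\Omega)\cap R=K(S\cap R)$. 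The key point is that, by property (iii), a $K^\C$-equivariant holomorphic section of $Z$ over $\{t\}\times K^\C S$ is the same thing as an $L$-equivariant holomorphic section over $\{t\}\times S$, and similarly a $K$-equivariant continuous section over $T\times K(S\cap R)$ corresponds to an $L$-equivariant continuous section over $T\times(S\cap R)$. So it suffices to produce, after shrinking $S$, an $L$-equivariant continuous section $\sigma_1$ over $T\times S$, holomorphic in the second variable, together with an $L$-equivariant homotopy over $T\times(S\cap R)$ from $\sigma_0|_{T\times(S\cap R)}$ to $\sigma_1|_{T\times(S\cap R)}$, fixing the given section over $\{t_0\}\times S$.

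Next I would trivialise. Since $(t_0,x_0)$ is an $L$-fixed point and $Z|_{\{t_0\}\times X}$ is a holomorphic $K^\C$-bundle with an $L$-fixed section through $(t_0,x_0)$, the holomorphic slice theorem applied to the bundle (or rather the linearisation of the $L^\C$-action on the fibre $F$ over $x_0$, as in the holomorphic slice theorem for bundles, cf.~\cite{Heinzner1991}) lets us shrink $S$ so that $Z|_{\{t_0\}\times X}$ becomes, over $K^\C S$, the bundle $K^\C\times^{L^\C}(S\times F_0)$ with $F_0=Z_{(t_0,x_0)}$ and $L^\C$ acting diagonally; equivalently, over the slice $Z|_{\{t_0\}\times S}\cong S\times F_0$ as $L$-bundles, with the given holomorphic section corresponding to a constant map $S\to F_0$ landing at the $L$-fixed point $o\in F_0$. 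The remaining issue is the $T$-direction: $Z\to T\times X$ need not be trivial in $t$. But over the slice, $Z|_{T\times S}$ is an $L$-bundle over $T\times S$ with $S$ a Stein $L^\C$-space, trivial over $\{t_0\}\times S$, so by Theorem~\ref{thm:homotopy.theorem}-type reasoning (homotopy invariance in $I$, applied coordinatewise in $T$ after subdividing $T$ into cubes, or more simply because $T$ is compact Hausdorff and we only need a \emph{local} statement) we may shrink $S$ once more and trivialise $Z|_{T\times S}\cong T\times S\times F_0$ as an $L$-bundle with $L$ acting diagonally. Under this trivialisation $\sigma_0$ becomes an $L$-equivariant continuous map $T\times(S\cap R)\to F_0$ agreeing with the constant map $o$ on $\{t_0\}\times S$.

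With everything trivialised, the statement becomes: given an $L$-equivariant continuous map $g_0\colon T\times(S\cap R)\to F_0$ with $g_0|_{\{t_0\}\times S}\equiv o$, after shrinking $S$ find an $L$-equivariant continuous $g_1\colon T\times S\to F_0$, holomorphic in $S$, and an $L$-equivariant homotopy over $T\times(S\cap R)$ from $g_0$ to $g_1$ fixing $\{t_0\}\times S$. Here one uses that $F_0$ is a complex manifold with a holomorphic action of the complex Lie subgroup $L^\C\subset\mathrm{Aut}(F_0)$ fixing $o$; passing to an $L$-invariant (equivalently $L^\C$-invariant) Stein neighbourhood of $o$ and using the holomorphic slice theorem for the $L^\C$-action at $o$, we may assume $F_0$ is an $L^\C$-stable subvariety of an $L^\C$-module $M$ with $o=0$. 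Now $g_0(t,\cdot)$ for $t$ near $t_0$ is a small $L$-equivariant map into a neighbourhood of $0$; the construction of $g_1$ and the homotopy is then a standard equivariant extension argument — extend $g_0$ from the Kempf--Ness set $S\cap R$ across $S$ continuously, then use the exponential/averaging trick to make it holomorphic in $S$ up to an $L$-equivariant homotopy — and this is exactly the type of argument already carried out in \cite{Heinzner-Kutzschebauch}; alternatively one invokes the relevant statement from \cite{Heinzner-Kutzschebauch} directly over the slice. I expect the \textbf{main obstacle} to be bookkeeping rather than conceptual: ensuring that all shrinkings of $S$ are compatible, that the various trivialisations can be chosen to respect the $L$-fixed section over $\{t_0\}$ simultaneously, and that the homotopies are $L$-equivariant and relative to $\{t_0\}\times S$ throughout — in particular, handling the parameter space $T$ uniformly (using its compactness to get a single $\Omega$) while the holomorphic-in-$S$ condition is only imposed fibrewise in $T$.
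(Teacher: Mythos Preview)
Your overall framework---reduce to a Luna slice $S$ at $x_0$, replace $K^\C$-equivariance by $L$-equivariance, and linearise the fibre near an $L$-fixed point---matches the paper. But the treatment of the parameter space $T$ has a genuine gap, and you have in fact correctly identified this as the main obstacle without resolving it.

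You propose to trivialise $Z|_{T\times S}\cong T\times S\times F_0$ as an $L$-bundle by ``Theorem~\ref{thm:homotopy.theorem}-type reasoning (homotopy invariance in $I$, applied coordinatewise in $T$ after subdividing $T$ into cubes).'' But in the application $T=\partial B$ is a sphere, not a cube, and Theorem~\ref{thm:homotopy.theorem} gives constancy along an interval factor, not global triviality over an arbitrary compact $T$. Your fallback (``we only need a local statement'') then contradicts the next sentence, where you use a \emph{global} trivialisation over $T$ to turn $\sigma_0$ into a single map $g_0\colon T\times(S\cap R)\to F_0$. Moreover, even granting such a trivialisation, your final step---produce $g_1$ holomorphic in $S$ and an $L$-homotopy to $g_0$---is deferred to a ``standard equivariant extension argument'' or an unspecified citation, whereas this is precisely where the work lies.

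The paper does not attempt a global trivialisation over $T$. Instead it observes that $\sigma_0(t,x_0)$ is an $L$-fixed point for \emph{every} $t\in T$ (not just $t_0$), and uses Proposition~\ref{prp:mixed.box.coordinates} to get, for each $t$, box coordinates $T_i\times V\times O_i$ near $\sigma_0(t,x_0)$ in which the $L$-action on $O_i$ is linear. After covering $T$ by finitely many such $T_i$ and shrinking $V$, the construction of $\sigma_1$ is then almost trivial: on $T_i$ one sets $\tilde\sigma_1(t,x)=\sigma_0(t,x_0)$ in box coordinates (constant in $x$, hence vacuously holomorphic), extends across $K^\C S$ via the Luna slice, and takes the homotopy to be the straight-line interpolation $s\tilde\sigma_1+(1-s)\sigma_0$ in $O_i$, which is $L$-equivariant because the action is linear. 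The substantive content is the inductive gluing over $\bigcup_{j\le i}T_j$ using separation functions $\xi\colon T\to[0,1]$ and the linearity of the $O_i$-coordinates, together with the bookkeeping ensuring $t_0$ stays in $T_1\setminus\overline{T_i}$ so that the section over $t_0$ is never altered. This patching over $T$ is the heart of the argument and is absent from your plan.
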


\begin{proof}
Let $q\in Q$ and $x_0\in\pi^{-1}(q) \cap R$.  Let $L=K_{x_0}$ be the stabiliser of $x_0$.  Let $S_\R$ be a topological slice for the $K$-action on $X$ at $x_0$.  We also have a Luna slice $S$ at $x_0$.

Note that $\sigma_0(t, x_0)$ is an $L$-fixed point for each $t\in T$.  By Proposition \ref{prp:mixed.box.coordinates} below, for each $t \in T$, there is a neighbourhood of $\sigma_0 (t, x_0)$ in $Z$, $L$-homeomorphic (over the bundle projection) to $T_0 \times V \times O$, such that the $L$-homeomorphism restricted to sets of the form $\{t\} \times V$ is holomorphic.  Here, $O$ is an $L$-neighbourhood of $0$ in the tangent space $T_{\sigma_0(t, x_0)} F_{(t, x_0)}$ to the fibre $F_{(t, x_0)}$ of the bundle (the fibre is a complex $L$-manifold), $T_0$ is a neighbourhood of $t$ in $T$, and $V$ is an $L$-invariant neighbourhood of $x_0$.  Now cover $T$ by finitely many such $T_i$ occurring in such box coordinates $T_i \times V_i \times O_i$.  Moreover, let the first $T_1$ contain our marked point $t_0$ and use box coordinates around the point $t_0$ (for later keeping the section unaltered there). 

By shrinking $V_i$, we can assume $V=V_i$ to be the same for each box, and since $T$ is compact we can moreover assume that $\sigma_0(t, x)$ is contained in the box whenever $t\in T_i$ and $x \in V$.   Also, we can arrange the finite cover $(T_i)$ of $T$ so that $T_i$ and $\bigcup_{j=1} ^{i-1}  T_j$ are separated for all $i$.  (We say that sets $A, B$ are separated if $\overline {A\setminus B}$ and $\overline{B\setminus A}$ are disjoint.) Moreover, arrange that $t_0 \in T_1 \setminus \overline T_i$ for all $i\ge 2$.

We now define a $K^\C$-equivariant section $\sigma_1$, holomorphic in the second variable, and an $L$-equivariant homotopy from $\sigma_0$ to $\sigma_1$ inductively over $\bigcup_{j=1} ^{i}  T_j \times V$ (possibly shrinking $V$ in the process).  The homotopy will have the additional property that $\sigma_s ( t, x_0)= \sigma_0(t, x_0)$.  On $T_1$, define in box coordinates $\tilde\sigma_1 (t, x) = \sigma_0(t_0, x)$, $t \in T_1$, $x \in V$, which is $L$-equivariant and holomorphic where it should be (here we use the fact that $\sigma_0(t, x)$ is contained in the box whenever $t\in T_1$). 

Since Luna slices can be shrunk, we may assume that $S \subset V\cap W$.  We can extend the restriction $\tilde\sigma_1 \vert_S$ to a continuous $K^\C$-equivariant section over $T_1 \times  K^\C S$, which is holomorphic when restricted to sets of the form $\{t\} \times \pi^{-1} (\Omega) $  and where  $K^\C S = \pi^{-1} (\Omega)$ is a saturated neighbourhood of $x_0$.  Call the extension $\sigma_1$.  In the box coordinates associated to $T_1$, we can define a homotopy $\sigma_s = s \sigma_1   + (1-s) \sigma_0$ over $T_1 \times S_\R$, which consists of $L$-equivariant sections since the $L$-action on $O_i$ is linear. This construction gives $\tilde\sigma_s (t_0, x) = \sigma_0(t_0, x)$ for $x \in S_\R$, $s\in I$, so the section over the marked point $t_0$ is not altered.

Assume that the homotopy (call it $\sigma_s^{i-1}$) exists on $\bigcup_{j=1} ^{i-1}  T_j$.  On $T_i$, define in box coordinates $\tilde\sigma_1 (t, x) = \sigma_0(t, x_0)$, $t \in T_i$, $x \in V$, which is $L$-equivariant and holomorphic where it should be.  (Here we use the fact that  $\sigma_0(t, x_0)$ is contained in the box whenever $t\in T_i$.)  Again using the Luna slice $S$, we can extend the restriction $\tilde\sigma_1 \vert_S$ to a continuous $K^\C$-equivariant section over $T_i \times  K^\C S$ , which is holomorphic when restricted to sets of the form $\{t\} \times \pi^{-1} (\Omega)$ and where  $K^\C S = \pi^{-1} (\Omega)$ is a saturated neighbourhood of $x_0$.  Call the extension $ \tilde\sigma_1 $.  By $K$-equivariance and since $x_0 \in S$, we have $\tilde\sigma_1( t, k x_0)= \sigma_0(t, k x_0)$ for $k \in K$.  Moreover, in the box coordinates associated to $T_i$, we can define a homotopy $\tilde \sigma_s = s \tilde\sigma_1 + (1-s) \sigma_0$ over $T_i \times S_\R$, which consists of $L$-equivariant sections since the $L$-action on $O_i$ is linear.

Shrinking $V$, we can furthermore assume that on $T_i \cap \bigcup_{j=1} ^{i-1} T_j$, the whole homotopy $\tilde\sigma_s$ is in the box coordinates associated to $T_i$.  By separation, there is a continuous function $\xi: T \to [0, 1]$ that equals $1$ on $\bigcup_{j=1} ^{i-1} T_j \setminus T_i$ and equals $0$ on $T_i \setminus \bigcup_{j=1} ^{i-1} T_j  $.  

The extended homotopy on $\bigcup_{j=1} ^{i} T_j$ is defined by $\xi (t) \sigma_s^{i-1} (t, x) + (1-\xi (t)) \tilde \sigma_s (t,x)$, where we use the box coordinates associated to $T_i$, that is, we look at sections as maps to $O_i$.  Since the action on $O_i$ is linear, the defined homotopy consists of $L$-equivariant sections.  The  same formula defines $\sigma_1(t,x) = \xi (t) \sigma_1 ^{i-1} (t, x) + (1-\xi (t)) \tilde \sigma_1  (t,x)$ on $\bigcup_{j=1} ^{i} T_j\times \pi^{-1} (\Omega) $.  We have $\sigma_1  ( t, k x_0)= \sigma_0(t, k x_0)$ for $k \in K$. By the additional assumption on $t_0$, we have $\xi (t_0) =1$. Therefore the homotopy is still not changed over $t_0$.  After finitely many steps the construction is complete.

The $K$-invariant neighbourhood $\pi^{-1} (\Omega) \cap W$ of $x_0$ equals $K (\pi^{-1} (\Omega) \cap S_\R)$.  By shrinking $\Omega$, we can therefore assume that the whole homotopy $\sigma_s (t,z)$ from  $\sigma_0(t, z)$ to $\sigma_1(t,z)$ is defined for all $t\in T$ and $z \in \pi^{-1} (\Omega) \cap S_\R$. 

This homotopy extends to a $K$-equivariant homotopy between the restrictions of $\sigma_0$ and $\sigma_1$ to $\pi^{-1} (\Omega) \cap W$.  Now \cite[Corollary 1(i), (ii), p.~329]{Heinzner-Kutzschebauch} applied to the $K$-invariant subset $K x_0$ of $R$ allows us to shrink $W$ and $\Omega$ further, so that $\pi^{-1} (\Omega) \cap R =  W \cap R$.  In other words, $W \cap R$ is a Kempf-Ness set for $\pi^{-1} (\Omega)$.  Thus we have constructed $\sigma_1 $ as desired, together with a homotopy of $K$-equivariant continuous sections defined over $T \times (W\cap R)$ connecting the restrictions of $\sigma_0$ and $\sigma_1$. 
\end{proof}

We now turn to the third and final step of the proof of the claim.  We have a continuous section $\sigma_1$ of the bundle $Z$ over $\partial B\times\pi^{-1}(\Omega)$.  We wish to extend it continuously across $B\times(\pi^{-1}(\Omega)\cap R)$.  We know that $\sigma_1\vert_{\partial B\times(\pi^{-1}(\Omega)\cap R)}$ is homotopic to a section that extends continuously to $B\times(\pi^{-1}(\Omega)\cap R)$.  We need a $K$-equivariant version of the deformation-invariance of extendability.  For convenience, set $M=B\times(\pi^{-1}(\Omega)\cap R)$ and $N=\partial B\times(\pi^{-1}(\Omega)\cap R)$.  We need a $K$-equivariant continuous lifting in the following square,
\[ \xymatrix{
(N\times I)\cup (M\times\{0\}) \ar[r] \ar^j[d] & Z\vert_M \ar^p[d] \\ M\times I \ar@{-->}[ur] \ar[r] & M
} \]
so we need to show that the inclusion $j$ is an acyclic cofibration and the bundle projection $p$ is a fibration in the appropriate model structure.  By basic equivariant homotopy theory \cite[Section III.1]{Mandell-May}, it suffices to observe the following.
\begin{itemize}
\item  For every closed subgroup $L$ of $K$,
\[\begin{CD}
\big((N\times I)\cup (M\times\{0\})\big)^L = \big((\partial B\times I)\cup(B\times\{0\})\big)\times (\pi^{-1}(\Omega)\cap R)^L  \\ @VVjV \\ (M\times I)^L=(B\times I)\times (\pi^{-1}(\Omega)\cap R)^L
\end{CD} \]
is a homotopy equivalence.  This is obvious.
\item  $j$ is a relative $K$-cell complex.  Indeed, as a $K$-invariant real-analytic subvariety of the complex $K$-space $\pi^{-1}(\Omega)$, $\pi^{-1}(\Omega)\cap R$ has a $K$-equivariant triangulation \cite[Theorem~B]{Illman}.
\item  For every closed subgroup $L$ of $K$, 
\[ (Z\vert_M)^L\overset{p}\longrightarrow M^L=B\times (\pi^{-1}(\Omega)\cap R)^L\] 
is locally trivial.  Namely, $K$ acts on the fibre of the bundle $\Aut P \to E$ through a structure group of biholomorphisms.  Applying Proposition \ref{prp:locally.trivial} below with $T$ a singleton and $L$ in place of $K$, we see that near each point in $E^L$, $\Aut P \to E$ is $L$-homeomorphic to a trivial bundle with a diagonal $L$-action.  Hence, $(\Aut P)^L\to E^L$ is locally trivial.
\end{itemize}

The proof of Theorem \ref{thm:main.result} is now complete.

\begin{remark}  \label{rem:simplification}
The proof of Theorem \ref{thm:main.result} simplifies drastically when $E=P$ is a principal $K^\C$-$\G$-bundle.  The first part of the proof of Theorem \ref{thm:main.result} is the same (with $\H$ trivial) and shows that the inclusion $\iota:\Gamma_\O(P)^K \hookrightarrow \Gamma_\mathscr C(P)^K$ induces a surjection of path components.  Let $r:\Gamma_\mathscr C(P)^K \to \Gamma_\mathscr C(P\vert_R)^K$ be the restriction map.  By Proposition \ref{prp:topological.fact}, $r$ is a homotopy equivalence.  Thus it suffices to show that $r\circ\iota$ induces a $\pi_{k-1}$-monomorphism and a $\pi_k$-epimorphism for every $k\geq 1$, for every base point in $\Gamma_\O(P)^K$.

Let $B$ be the closed unit ball in $\R^k$, $k\geq 1$, and let $\alpha_0:B\to \Gamma_\mathscr C(P\vert_R)^K$ be a continuous map taking the boundary sphere $\partial B$ into $\Gamma_\O(P)^K$ (more precisely, $\alpha_0\vert_{\partial B}$ factors through $r\circ\iota$).  Choose a base point $b_0\in \partial B$.  It suffices to prove that there is a deformation $\alpha:B\times I\to\Gamma_\mathscr C(P\vert_R)^K$ of $\alpha_0=\alpha(\cdot,0)$, keeping $\alpha_t(b_0)$ fixed and $\alpha_t(\partial B)$ in $\Gamma_\O(P)^K$, such that $\alpha_1$ takes all of $B$ into $\Gamma_\O(P)^K$.

For every $b\in B$, let $\gamma_0(b)$ be \textit{the} section of $\G$ with
\[ \alpha_0(b)(x)=\alpha_0(b_0)(x)\cdot \gamma_0(b)(x) \]
for all $x\in X$ if $b\in\partial B$, but only for $x\in R$ if $c\in B\setminus \partial B$.  Then $\gamma_0$ is a global $K$-equivariant $NHC$-section of $\G$ (with $C=B$, $H=\partial B$, $N=\{b_0\}$).  By Theorem \ref{thm:HK-NHC}(a), we can deform $\gamma_0$ through $K$-equivariant $NHC$-sections $\gamma_t$ of $\G$, $t\in I$, to the identity section.  Now let $\alpha_t(b)=\alpha_0(b_0)\cdot\gamma_t(b)$.  Then $\alpha$ is as desired; $\alpha_1$ even takes all of $B$ to $\alpha_0(b_0)$.
\end{remark}

\subsection{Proof of the equivariant local triviality theorem (Theorem \ref{thm:local.triviality})}
Let $K$ be a compact Lie group.  Let $Y$ be a completely regular space with a continuous $K$-action, so the topological slice theorem applies.  Let $p: E\to Y$ be a topological $K$-bundle with a complex manifold $F$ as its fibre.  Assume that the structure group $A$ of $E$ is a complex Lie subgroup of the biholomorphism group of $F$, and that $K$ acts through $A$.  Let $v_0 \in E$ be a $K$-fixed point. Then $y_0=p (v_0) \in Y$ is a $K$-fixed point and $F_{y_0}=p^{-1}(y_0)$ is a complex $K$-manifold.

\begin{proposition} \label{prp:box.coordinates} 
There exists a $K$-neighbourhood $U$ of $v_0$ in $E$ and a $K$-neighbourhood $W$ of $v_0$ in $F_{y_0}$ such that the $K$-map $p\vert_U : U \to p(U) $ is topologically $K$-isomorphic to the $K$-product bundle $p(U) \times W \to p(U)$. 
\end{proposition}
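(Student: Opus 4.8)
The plan is to reduce the statement to a cocycle over a neighbourhood of $y_0$ and then to \emph{straighten} that cocycle near the fixed point by an equivariant averaging argument. First I would trivialise $E$ over a $K$-invariant open neighbourhood $V$ of $y_0$; such a $V$ exists because $E$ is locally trivial at $y_0$ and, by the tube lemma together with compactness of $K$, every neighbourhood of the $K$-fixed point $y_0$ contains a $K$-invariant open one. In such a trivialisation $E|_V\cong V\times F$ the $K$-action has the form $k\cdot(y,f)=(ky,\theta(k,y)f)$ for a continuous cocycle $\theta\colon K\times V\to A$ with $\theta(k_1k_2,y)=\theta(k_1,k_2y)\theta(k_2,y)$ and $\theta(e,y)=e$, and $\theta$ is valued in $A$ precisely because $K$ acts by $A$-bundle maps. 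Since $y_0$ is fixed, $\rho:=\theta(\cdot,y_0)\colon K\to A$ is a continuous homomorphism, the fibre $F_{y_0}$ is identified with $F$ carrying the $K$-action through $\rho$, and the point $f_0\in F$ corresponding to $v_0$ is $\rho(K)$-fixed.

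The key step is the following straightening: there are a $K$-invariant neighbourhood $V'\subseteq V$ of $y_0$ and a continuous map $g\colon V'\to A$ with $g(y_0)=e$ such that $\theta(k,y)=g(ky)^{-1}\rho(k)g(y)$ for all $k\in K$, $y\in V'$. Granting this, $\Psi\colon(y,f)\mapsto(y,g(y)f)$ is a $K$-equivariant homeomorphism over $\id_{V'}$ from $E|_{V'}$ (with the $\theta$-action) onto $V'\times F$ carrying the \emph{product} action $k\cdot(y,f)=(ky,\rho(k)f)$. Choosing a $\rho(K)$-invariant open neighbourhood $W$ of $f_0$ in $F$ and setting $U=\Psi^{-1}(V'\times W)$, one obtains a $K$-invariant open neighbourhood of $v_0$ with $p(U)=V'$ on which $\Psi$ restricts to a $K$-isomorphism of $p|_U\colon U\to V'$ with the projection $V'\times W\to V'$; and $W$ is a $K$-neighbourhood of $v_0$ in $F_{y_0}$. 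That is exactly the assertion of the proposition.

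To prove the straightening, observe that $(k,y)\mapsto\theta(k,y)\rho(k)^{-1}$ is continuous and identically $e$ on $K\times\{y_0\}$, so by the tube lemma, after shrinking $V'$ and making it $K$-invariant, this map takes values in an arbitrarily small neighbourhood of $e$ in $A$, uniformly in $k\in K$. I would then work in exponential coordinates on $A$ near $e$: write $\theta(k,y)\rho(k)^{-1}=\exp s(k,y)$ with $s\colon K\times V'\to\lie{a}=\mathrm{Lie}\,A$ small and $s(k,y_0)=0$, and look for $g=\exp\gamma$. By the Baker--Campbell--Hausdorff formula the equation for $\gamma$ is a perturbation of the linear coboundary equation $\mathrm{Ad}(\rho(k))\gamma(y)-\gamma(ky)=s(k,y)$, which is solved by the Haar average $\gamma(y)=\int_K\mathrm{Ad}(\rho(h)^{-1})s(h,y)\,dh$ (up to sign; this uses the cocycle identity inherited by $s$ and the invariance of Haar measure, and it automatically gives $\gamma(y_0)=0$). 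Since the nonlinear corrections are quadratically small near $y_0$, a standard iteration argument in $A$, anchored at the identity, then upgrades this to an exact solution $g$ of $\theta(k,y)=g(ky)^{-1}\rho(k)g(y)$.

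The main obstacle is precisely this straightening, and within it the non-abelianness of $A$: one cannot average directly in $A$, so the argument must be localised near the identity, where exponential coordinates render the problem near-linear; compactness of $K$ is used both to secure the uniform smallness of the cocycle and to perform the Haar averaging. The remaining ingredients --- the $K$-invariant trivialising neighbourhood, the passage between the $\theta$- and $\rho$-actions, and the final restriction to $V'\times W$ --- are routine once the straightening is available.
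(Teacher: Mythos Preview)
Your approach is correct but genuinely different from the paper's. You straighten the cocycle $\theta\colon K\times V\to A$ to the constant homomorphism $\rho$ by averaging in $\lie a$ and then iterating away the Baker--Campbell--Hausdorff corrections; this works (each step makes the error quadratically smaller, so the infinite product of corrections converges once the initial error is small enough), though the iteration deserves a bit more detail than ``a standard argument.'' The paper sidesteps the nonabelianness of $A$ entirely by averaging in a \emph{different} linear space, namely the tangent space $V=T_{v_0}F_{y_0}$, on which $K$ already acts by a linear representation $\rho\colon K\to\GL(V)$. After trivialising $E$ as $U_0\times F$ and linearising the central fibre by a $K$-biholomorphism $\alpha\colon W\to D\subset V$, the paper takes the non-equivariant projection $\phi\colon U\to V$ and simply averages it, $\tilde\phi(y,w)=\int_K\rho(k)\,\phi(k^{-1}(y,w))\,dk$. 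The map $(p,\tilde\phi)$ is then automatically $K$-equivariant, restricts to $\alpha$ on the central fibre, and is a fibrewise local biholomorphism nearby because the fibrewise Taylor coefficients depend continuously on $y$. So the paper needs one averaging and no iteration, but genuinely uses the holomorphic structure on $F$ to control fibrewise injectivity via the derivative; your route uses only the Lie-group structure of $A$ and would go through even without the complex structure on $F$, at the cost of the iteration. Incidentally, your straightening is itself an instance of the proposition, applied to the bundle $V'\times A\to V'$ with action $k\cdot(y,a)=(ky,\rho(k)\,a\,\theta(k,y)^{-1})$ and fixed point $(y_0,e)$: a $K$-section through $(y_0,e)$ is exactly your $g$.
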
  

\begin{proof}
Since the statement is local around $v_0$, we may assume that the bundle $E$ is topologically trivial, that is, a product bundle $U_0  \times F$ (but not necessarily with a diagonal action).  Now $K$ acts by a representation $\rho : K \to \GL(V)$ on $V= T_{v_0} F_{y_0}$, and there is a $K$-neighbourhood $W$ of $v_0$ in $F_{y_0}$ that is $K$-equivariantly biholomorphic to a $K$-neighbourhood $D$ of the origin $0$ in $V$ by a biholomorphism $\alpha : W \to D$.  Thus in a $K$-neighbourhood $U$ of $v_0$ in $E$, we can consider the map $\phi : U \to V$ with $\phi (v_0) = 0$ given by
\[ U \hookrightarrow U_0 \times W \overset{\pi_2}\to W \overset\alpha\to D \hookrightarrow V.\]
Since $K$ acts linearly on $V$, we can average $\phi$ over the compact group $K$: 
\[ \tilde \phi (y, w) = \int_K \rho (k) \phi (k^{-1} (y, w))\, dk. \]
This map is continuous, and when restricted to a fibre $\{y\}\times W$, it is a holomorphic map to $V$.  Indeed, by assumption, each $k \in K$ maps the fibre over $y$ biholomorphically to the fibre over $k y$, so the restriction of the integrand $\rho (k) \phi (k^{-1} (y, w))$ is a holomorphic map from a fibre $\{y\}\times W$ to $V$. Since holomorphic maps to a vector space form a Fr\'echet space, the average $\tilde \phi$ is a  holomorphic map from the fibre $\{y\}\times W$ to $V$.  Moreover the restriction of $\tilde \varphi$ to $\{y_0\}\times W$ equals the biholomorphism $\alpha$. Since the Taylor coefficients of $\tilde\varphi\vert_{\{y\} \times W}$ at $(y, v_0)$ depend continuously on $y$,  the restriction of $\tilde\varphi$ to $\{y\}\times W$ is a local biholomorphism when restricted to fibres in a $K$-neighbourhood  of $v_0$.  After shrinking if necessary, the map $(p,\tilde\phi) : U \to U_0  \times W$ is a continuous, open, and  injective $K$-equivariant map, that is, a $K$-equivariant homeomorphism onto its image $\tilde \phi (U) \subset U_0 \times D$.  Going back with $\alpha^{-1}$ from $D$ to $W$ (and shrinking if necessary) completes the proof. 
\end{proof}

The holomorphic version of Proposition \ref{prp:box.coordinates} is \cite[Proposition 2, p.~332]{Heinzner-Kutzschebauch}.  What we need here is a holomorphic version with continuous dependence on a parameter.  Let $X$ be a Stein space with an action of $K$ by biholomorphisms.  Let $T$ be a topological space with a trivial $K$-action.  Let $E \overset{p}\to Y= T\times X $ be a topological $K$-bundle with a complex manifold $F$ as its fibre, such that $p$ is a holomorphic $K$-bundle when restricted to sets of the form $\{t\} \times X$.  In other words, we have a continuous family of holomorphic $K$-bundles over the Stein space $X$.  Here $K$ acts through a complex Lie subgroup $A$ of the biholomorphism group of $F$, and $E$ is defined by a cocycle of continuous maps, each defined on a set of the form $T_0\times V \to A$, where $T_0\subset T$ and $V\subset X$ are open, and holomorphic when restricted to sets of the form $\{t\} \times V$.

Let $v_0 \in E$ be a $K$-fixed point.  Then $(t_0, x_0) = p (v_0) \in Y$ is a $K$-fixed point and $F_{(t_0,x_0)}$ is a complex $K$-manifold.

\begin{proposition} \label{prp:mixed.box.coordinates} 
There is a $K$-neighbourhood $U$ of $v_0$ in $E$ and a $K$-neighbourhood $W$ of $v_0$ in $F_{(t_0,x_0)}$ such that the $K$-map $p\vert_U: U \to p(U)=T_0\times V   $ is topologically $K$-isomorphic to the $K$-product bundle $p(U) \times W \to p(U)$, in such a way that the $K$-isomorphism restricted to sets of form $\{t\} \times V$ is holomorphic. Here, $W$ is a $K$-neighbourhood of $0$ in the tangent space $T_{\sigma (t_0, x_0)} F_{(t_0, x_0)}$, $T_0$ is a neighbourhood of $t_0$ in $T$, and $V$ is a $K$-invariant neighbourhood of $x_0$.
\end{proposition}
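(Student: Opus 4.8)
The plan is to merge the topological averaging argument of Proposition~\ref{prp:box.coordinates} with the holomorphic version \cite[Proposition 2, p.~332]{Heinzner-Kutzschebauch}, carrying the continuity in $t$ and the holomorphy in the $X$-variable through the same computation. Since the assertion is local around $v_0$, I would first trivialise $E$ over a neighbourhood $T_0\times V$ of $(t_0,x_0)$, writing $E|_{T_0\times V}\cong(T_0\times V)\times F$, so that the $K$-action takes the form $k\cdot(t,x,f)=(t,kx,a_k(t,x)(f))$, where the structure maps $T_0\times V\to A$ involved are continuous and holomorphic on each slice $\{t\}\times V$ (this is precisely the hypothesis on $E$). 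Since $x_0$ is a $K$-fixed point of $X$ and $v_0$ a $K$-fixed point of $E$, their orbits are points, so after shrinking and intersecting over the compact group $K$ we may assume $V$, and a fibre neighbourhood $W_0$ of $v_0$ in $F_{(t_0,x_0)}$, to be $K$-invariant; $T_0$ is $K$-invariant automatically.

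Next, exactly as in the proof of Proposition~\ref{prp:box.coordinates}, I would linearise the fibre over $(t_0,x_0)$: the group $K$ acts on $V_0=T_{v_0}F_{(t_0,x_0)}$ by a representation $\rho$, and there is a $K$-equivariant biholomorphism $\alpha:W_0\to D$ onto a $K$-invariant neighbourhood $D$ of $0$ in $V_0$. On a small $K$-invariant neighbourhood $U$ of $v_0$, define $\phi:U\to V_0$ as the composite $U\hookrightarrow(T_0\times V)\times W_0\overset{\pr}\to W_0\overset\alpha\to D\hookrightarrow V_0$, and average:
\[ \tilde\phi(t,x,f)=\int_K \rho(k)\,\phi\bigl(k^{-1}(t,x,f)\bigr)\,dk. \]
This is continuous and, since $\rho$ is linear, $K$-equivariant. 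For fixed $t$, the integrand is $\rho(k)\circ\alpha$ applied to the $F$-component of $k^{-1}(t,x,f)$, which is holomorphic in $(x,f)$ because the structure cocycle of $E$ is holomorphic in the $X$-variable and $A$ acts on $F$ by biholomorphisms; as spaces of holomorphic maps into the finite-dimensional vector space $V_0$ are closed subspaces of Fr\'echet spaces, the $K$-average $\tilde\phi$ is again holomorphic on each slice $\{t\}\times V\times F$. Because $\alpha$ is already $K$-equivariant, $\tilde\phi$ restricts to $\alpha$ on the fibre over $(t_0,x_0)$, hence is a biholomorphism there; and since the Taylor coefficients of $\tilde\phi|_{\{(t,x)\}\times W_0}$ depend continuously on $(t,x)$ (Cauchy estimates together with joint continuity of $\tilde\phi$), the map $\tilde\phi$ restricts to a biholomorphism onto its image on each fibre over $(t,x)$ close to $(t_0,x_0)$.

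Finally, after one more shrinking of $U$, $T_0$ and $V$, the $K$-equivariant map $(p,\tilde\phi):U\to(T_0\times V)\times D$ is continuous, open and injective, hence a $K$-homeomorphism onto its image, which---shrinking once more---is a product $p(U)\times W$ with $W$ a $K$-invariant neighbourhood of $0$ in $V_0$; by the previous paragraph it is holomorphic on each slice $\{t\}\times V$. This is the asserted $K$-isomorphism. The step I expect to require the most care, beyond simply fusing the two known arguments, is the verification that the Haar average of a $K$-family of slicewise-holomorphic maps is again slicewise holomorphic, i.e.\ that fibrewise holomorphy survives the integration; this is where the Fr\'echet-space structure on spaces of holomorphic maps and the joint continuity of the integrand in $(k,t,x,f)$ are used. (Steinness of $X$ is not needed for this proposition, which is local in $X$; it enters only in later applications.)
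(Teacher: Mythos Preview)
Your proposal is correct and follows essentially the same approach as the paper: trivialise locally, linearise the fibre via Bochner, average the fibre-projection map over $K$, and observe that slicewise holomorphy survives the averaging because the structure cocycle is holomorphic on each $\{t\}\times V$ and holomorphic maps into a vector space form a Fr\'echet space. The paper's own proof is a terse three-line reference back to Proposition~\ref{prp:box.coordinates}, noting only the extra point that $\phi$ is holomorphic on sets $\{t\}\times V\times W$ and hence so is $\tilde\phi$; you have simply unpacked this in full detail, including the remark (not made in the paper) that Steinness of $X$ is irrelevant here.
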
  

\begin{proof}  The proof is similar to the proof of Proposition \ref{prp:box.coordinates}.  The bundle $E$ is topologically trivial over a set of the form $U_0= T_0\times V$, that is, it is topologically isomorphic to a product bundle $U_0 \times F$, such that the isomorphism  is holomorphic over sets of the form $\{t\} \times V$.  The restriction of $\phi$ in the earlier proof, not only to a fibre $\{x\}\times W$, but also to a set of the form $\{t\} \times V \times W$ is holomorphic.  This implies that the averaged map $\tilde \phi$ on a set of the form $V \times W$ is holomorphic.
\end{proof}

Note that Proposition \ref{prp:mixed.box.coordinates} includes \cite[Proposition 2, p.~332]{Heinzner-Kutzschebauch} as the special case when $T$ is a point.

\begin{corollary}  \label{cor:section}
Let $p: E \to Y$ be a topological $K$-bundle as in Proposition \ref{prp:box.coordinates} and let  $v_0 \in E$ be a $K$-fixed point. Then there is a $K$-invariant neighbourhood  $U_0 $ of $y_0 = p (v_0)$ in $X$ and a $K$-equivariant continuous section $s: U_0 \to E$ with $s(y_0)= v_0$. 
\end{corollary}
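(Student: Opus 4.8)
The plan is to apply Proposition \ref{prp:box.coordinates} directly and then patch the resulting local trivialisation with the topological slice theorem. First I would invoke Proposition \ref{prp:box.coordinates} at the $K$-fixed point $v_0$: it produces a $K$-neighbourhood $U$ of $v_0$ in $E$ and a $K$-neighbourhood $W$ of $v_0$ in the fibre $F_{y_0}$ together with a topological $K$-isomorphism $p|_U \cong (p(U)\times W \to p(U))$. Here $y_0 = p(v_0)$ is automatically a $K$-fixed point of $X$, since the projection is $K$-equivariant. Under this $K$-isomorphism, the point $v_0$ corresponds to $(y_0, w_0)$ for some $K$-fixed $w_0 \in W$; in the coordinates furnished by the proof of Proposition \ref{prp:box.coordinates}, $w_0$ is simply the origin $0$ of the tangent space $V = T_{v_0}F_{y_0}$, which is $K$-fixed because $K$ acts linearly on $V$.

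Next I would produce a $K$-equivariant section of the model bundle $p(U)\times W \to p(U)$ passing through $(y_0,w_0)$. The set $p(U)$ is a $K$-invariant neighbourhood of $y_0$ in $X$, and the constant assignment $z \mapsto (z, w_0)$ is a continuous section; it is $K$-equivariant precisely because $w_0$ is a $K$-fixed point of $W$. Transporting this section back through the $K$-isomorphism of Proposition \ref{prp:box.coordinates} gives a $K$-equivariant continuous section $s_0$ of $E$ over the $K$-invariant neighbourhood $p(U)$ of $y_0$ with $s_0(y_0) = v_0$. Finally, shrinking $p(U)$ if necessary so that it is of the form $K S$ for a slice $S$ at $y_0$ (or simply taking $U_0 = p(U)$, which is already $K$-invariant and open), we obtain the desired $K$-invariant neighbourhood $U_0$ of $y_0$ and the $K$-equivariant continuous section $s = s_0 : U_0 \to E$ with $s(y_0) = v_0$.

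I do not expect any serious obstacle here: the content is entirely carried by Proposition \ref{prp:box.coordinates}, and the remaining observation—that a product $K$-bundle over a $K$-space admits a $K$-equivariant section through any $K$-fixed point, given by the constant section at a $K$-fixed point of the fibre—is elementary. The only point requiring a word of care is that the point of the fibre through which we section must be $K$-fixed; this is guaranteed because the trivialisation in Proposition \ref{prp:box.coordinates} is built so that $v_0$ sits over the origin of the linear $K$-representation $V$, and $0$ is fixed by any linear action.
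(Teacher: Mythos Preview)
Your proof is correct and follows essentially the same approach as the paper: use the box coordinates $p(U)\times W$ from Proposition~\ref{prp:box.coordinates} and take the constant section $y\mapsto (y,v_0)$, which is $K$-equivariant because $v_0$ (the origin in the linearised fibre) is $K$-fixed. The paper's proof is the same one-line argument; your mention of the slice theorem is unnecessary, as you yourself note, since $p(U)$ is already a $K$-invariant open neighbourhood of $y_0$.
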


\begin{proof}
Take $s$ to be the constant section $y \mapsto (y, v_0)$ in the local coordinates $\Omega \times W$ provided by Proposition \ref{prp:mixed.box.coordinates}.
\end{proof}

We turn again to a parametric situation.  Let $X$ be a completely regular space with a continuous $K$-action (for example a Stein space with a $K$-action by biholomorphisms) and let $T$ be a topological space with a trivial $K$-action.  Let $E \overset p \to T\times X $ be a topological $K$-bundle on $T\times X$ with a complex manifold $F$ as its fibre, such that $E$ is defined by a cocycle of continuous maps into a complex Lie subgroup $A$ of the biholomorphism group of $F$, and $K$ acts by elements of $A$.

Let $x\in X$, let $L= K_x$ be the stabiliser of $x$, and let $S$ be a local topological slice for the $K$-action on $X$ at $x$.  Thus $S$ is an $L$-invariant subset of $X$ containing $x$, together with a $K$-equivariant homeomorphism $ U \to K\times^L S$ from a $K$-neighbourhood $U= K S$ of $x$ in $X$. Then, for each $t\in T$, $T\times S$ is a topological slice for the $K$-action on $T\times X$ at $(t,x)$.  If $X$ is a Stein $K$-space, a real-analytic slice $S$ can be constructed by locally $K$-equivariantly embedding $X$ into a $K$-module and intersecting a slice for the linear $K$-action with $X$.  For each $t\in T$, the fibre $F_{(t,x)}$ is by assumption a complex $L$-manifold. 

\begin{proposition}   \label{prp:locally.trivial}
The bundle $E$ is equivariantly locally trivial, that is, each $t_0 \in T$ has a neighbourhood $T_0$ in $T$ such that after possibly shrinking $S$,  the restriction of $E$ to the $K$-neighbourhood $T_0 \times U$ of $(t_0, x)$, where $U= K S$,  is $K$-homeomorphic to the $K$-bundle 
\[T_0 \times (K\times ^L (S \times F_{(t_0, x)}))  \to T_0\times(K\times^L S), \] 
where the $L$-action on $S \times F_{(t_0, x)}$ is diagonal.
\end{proposition}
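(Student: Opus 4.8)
The plan is to reduce the parametric statement to the non-parametric equivariant local triviality of a bundle over a slice, using the box-coordinate result (Proposition \ref{prp:mixed.box.coordinates}) to handle the holomorphic-in-the-parameter structure, and then to propagate triviality from the slice $S$ across the tube $KS$ by the standard induced-bundle construction. First I would restrict attention to the fibre over $(t_0,x)$ and the slice $T\times S$ for the $K$-action at $(t_0,x)$ on $T\times X$. Since $L=K_x$ fixes $(t_0,x)$, the fibre $F_{(t_0,x)}$ is a complex $L$-manifold, and the key point is to produce, after shrinking $S$ and a neighbourhood $T_0$ of $t_0$, an $L$-equivariant continuous trivialisation of $E$ over $T_0\times S$, that is, an $L$-homeomorphism $E\vert_{T_0\times S}\cong (T_0\times S)\times F_{(t_0,x)}$ compatible with the projections. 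Given such a trivialisation, one extends it over the tube by the universal property of the induced bundle: the $K$-equivariant homeomorphism $U\cong K\times^L S$ identifies $E\vert_{T_0\times U}$ with $T_0\times(K\times^L(E\vert_{\{t_0\}\times S}))$, and plugging in the $L$-trivialisation of $E\vert_{T_0\times S}$ gives exactly the $K$-bundle $T_0\times(K\times^L(S\times F_{(t_0,x)}))\to T_0\times(K\times^L S)$ with diagonal $L$-action, as claimed. The step "$E\vert_{T_0\times U}\cong T_0\times(K\times^L (E\vert_{\{t_0\}\times S}))$ as $K$-bundles" is a formal consequence of $E$ being a $K$-bundle and $U\cong K\times^L S$, so the real content is entirely in the $L$-equivariant trivialisation over the slice.

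To build that trivialisation I would proceed as in the proof of Proposition \ref{prp:box.coordinates}, but relative to the $L$-fixed point. Unlike in that proposition, $E$ need not have an $L$-fixed point over $(t_0,x)$ unless the fibre $F_{(t_0,x)}$ has an $L$-fixed point; so the argument cannot be applied verbatim. Instead, since $E$ is defined by a cocycle into $A$ and $K$ (hence $L$) acts through $A$ on the fibre $F$, I would first trivialise $E$ topologically over $T_0\times V$ for some neighbourhood $V$ of $x$ and then correct the trivialisation to be $L$-equivariant by an averaging argument over the compact group $L$. Concretely: start from a topological trivialisation $E\vert_{T_0\times V}\cong (T_0\times V)\times F$, holomorphic over $\{t\}\times V$; the $L$-action in these coordinates is by a cocycle $\ell\mapsto a_\ell(t,v)\in A$ acting on $F$. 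One wants a continuous map $(t,v)\mapsto b(t,v)\in\mathrm{Biol}(F)$, holomorphic in $v$, intertwining this twisted action with the constant action $\ell\mapsto a_\ell(t_0,x)$; the model is that $a_\ell(t_0,x)$ generates a compact (hence after a biholomorphic change, linear) $L$-action on $F$ near a point, and one averages the local coordinate charts over $L$ as in Proposition \ref{prp:box.coordinates}, keeping track of the parameter $t$ using that holomorphic maps into a vector space form a Fréchet space so averages stay holomorphic in $v$. This is exactly the content of Proposition \ref{prp:mixed.box.coordinates} applied with $L$ in place of $K$ around a suitable point of $E$ over $(t_0,x)$: it furnishes an $L$-neighbourhood of that point in $E$, $L$-homeomorphic (over the base) to $T_0'\times V'\times O$ with $O$ a linear $L$-neighbourhood in a tangent space, holomorphic in the $V'$-variable. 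Intersecting the base with $S$ and shrinking gives the $L$-trivialisation of $E\vert_{T_0\times S}$ onto $(T_0\times S)\times O$, and identifying $O$ with an $L$-neighbourhood inside $F_{(t_0,x)}$ finishes this step.

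The main obstacle is the passage from the local box coordinates around one point of the fibre to a trivialisation over the whole slice $S$ (and a genuine neighbourhood $T_0$ of $t_0$), with $L$-equivariance and holomorphic dependence on $x$ maintained simultaneously. Box coordinates are only local in the fibre direction, so one must cover the relevant part of $F_{(t_0,x)}$ and glue; but the diagonal $L$-bundle structure is not preserved under arbitrary regluing. The way around this is to observe that it suffices to trivialise over $S$ shrunk to a small $L$-invariant neighbourhood of $x$ and over $T_0$ shrunk to a small neighbourhood of $t_0$, since the statement only asserts local triviality; and on such a small base, by continuity the image section stays inside a single box, so a single application of Proposition \ref{prp:mixed.box.coordinates} suffices and no gluing in the fibre is needed. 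Thus, after shrinking $S$ and choosing $T_0$ small, the box-coordinate trivialisation is already defined over all of $T_0\times S$, and the induced-bundle extension over $T_0\times KS$ completes the proof. The cube-parameter refinement ("if $Y$ is a cube, $V=Y$") is not needed here; the neighbourhood $T_0$ of $t_0$ in $T$ is all that is claimed.
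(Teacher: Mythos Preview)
Your overall architecture is right: produce an $L$-equivariant trivialisation of $E\vert_{T_0\times S}$, then extend over the tube $KS$ by the induced-bundle construction. But the step where you actually build that $L$-trivialisation has a genuine gap.

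You correctly observe that $E$ need not have an $L$-fixed point over $(t_0,x)$, so Proposition~\ref{prp:mixed.box.coordinates} cannot be applied to $E$ directly. You then propose to average to obtain an intertwining map $b(t,v)\in\mathrm{Biol}(F)$, and claim this ``is exactly the content of Proposition~\ref{prp:mixed.box.coordinates} applied \ldots\ around a suitable point of $E$ over $(t_0,x)$''. This is circular: that proposition \emph{requires} a fixed point as input, and you have no candidate ``suitable point''. Moreover, the averaging in Propositions~\ref{prp:box.coordinates}/\ref{prp:mixed.box.coordinates} takes place in a vector space, not in the biholomorphism group of $F$; there is no direct way to average elements of $A$. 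Finally, even granting a fixed point, box coordinates only trivialise a neighbourhood \emph{in the total space}, i.e.\ they produce $(T_0\times S)\times O$ with $O$ a small open set in the fibre --- not a trivialisation of the whole bundle $E\vert_{T_0\times S}$ with fibre $F_{(t_0,x)}$. Your remark that ``the image section stays inside a single box'' presupposes a section of $E$ that you have not constructed.

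The paper's proof bypasses all of this with one move you are missing: instead of working with $E$, pass to the \emph{isomorphism bundle} from $E\vert_{T_0\times S}$ (with its twisted $L$-action via $g(l,t,s)$) to the diagonal bundle $(T_0\times S)\times F_{(t_0,x)}$. This bundle has fibre $A$, and it carries a canonical $L$-fixed point, namely the identity $e\in A$ over $(t_0,x)$, since $g(l,t_0,x)\cdot e\cdot g(l^{-1},t_0,x)=e$. Now Corollary~\ref{cor:section} (the immediate consequence of the box-coordinate proposition) applies to \emph{this} bundle and yields an $L$-equivariant section through $e$ over a shrunk $T_0\times S$. A section of the isomorphism bundle \emph{is} by definition an $L$-equivariant trivialisation of $E\vert_{T_0\times S}$ onto $(T_0\times S)\times F_{(t_0,x)}$ --- with the full fibre, not just a neighbourhood. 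Extending this section $K$-equivariantly over $T_0\times KS$ then finishes the proof exactly as you outlined.
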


\begin{proof}
After shrinking $S$ and choosing $T_0$ small enough, we can assume that $E$ is trivial over $ T_0 \times S$.  The $L$-action on $E\vert_{T_0 \times S}$ is given by 
\[L \times (T_0 \times S)\times F \to (T_0 \times S)\times F, \quad (l, t, s, f) \mapsto (t, l s, g(l, t, s) (f)), \] 
where $g : L \times T_0 \times S \to A$ is a continuous map satisfying the functional equations for an action.  As already mentioned,  $F_{(t_0, x)}$ is an $L$-manifold with the action $ (l, f) \mapsto  g(l, t_0, x) (f) $, where $g (l , t_0, x)$ defines a group homomorphism $L \to A$.

Consider the locally trivial bundle of isomorphisms from $E\vert_{T_0 \times S}$ (with the $L$-action already described) to $T_0 \times S \times F_{(t_0, x)}$.  Both bundles are trivial (but have different $L$-actions as described above).  The isomorphism bundle is a trivial bundle of the form $(T_0 \times S) \times  A$ and carries an $L$-action by pre- and postcomposition 
\[ l (t, s, z) = (t, l s, g(l,t, s) z g(l^{-1}, t_0, x)). \]
Since $((t_0, x), e)$ is a fixed point ($e$ being the identity element of $A$), Corollary \ref{cor:section} implies that there is a continuous $L$-section $\sigma (t,s)$ of the isomorphism bundle, which, after shrinking, can be assumed to be defined over $T_0 \times S$.  This section extends to a continuous $K$-equivariant section of the $K$-bundle of isomorphisms from $E\vert_{ T_0 \times U}$ to $T_0\times (K \times^L (S\times F_{(t_0, x)}))$.  The extended section gives the desired $K$-homeomorphism from $E\vert_{T_0\times U}$ to $T_0 \times (K\times ^L (S \times F_{(t_0, x)}))$.
\end{proof}

If $T$ is a cube $[0, 1]^n$, then we can prove local triviality over all of $T$.  

\begin{proposition}   \label{prp:cube}
Let $ x\in X$ and  $t_0 \in T=[0,1]^n$.  After possibly shrinking $S$, the restriction of $E$ to the $K$-neighbourhood $T \times KS$ of $(t_0,x)$ is $K$-homeomorphic to the $K$-bundle 
\[ T \times (K\times ^L (S \times F_{(t_0, x)}))  \to T\times(K\times^L S), \] 
where the $L$-action on $S\times F_{(t_0, x)}$ is diagonal.
\end{proposition}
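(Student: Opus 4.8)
The strategy is to deduce the statement from Proposition \ref{prp:locally.trivial} by a compactness argument, and then to patch the resulting local product structures together one coordinate direction at a time, exactly as in the classical proof that a fibre bundle over $X\times I$ restricts to mutually isomorphic bundles over the slices $X\times\{t\}$, taking care that $K$-equivariance and holomorphy along slices survive each gluing.

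First I would apply Proposition \ref{prp:locally.trivial} at $(t,x)$ for every $t\in T=[0,1]^n$. This yields an open neighbourhood $T_t\ni t$ and a shrinking of $S$ (which I take to be an $L$-invariant relatively open sub-slice, so that finitely many of them may be intersected to give another such slice) over which $E$ becomes, over $T_t$, the product $K$-bundle with model fibre $K\times^L(S\times F_{(t,x)})$, holomorphically over sets $\{t'\}\times KS$. Since $T$ is compact, finitely many $T_{t_1},\dots,T_{t_m}$ cover $T$; intersecting the corresponding slices, and using that $T$ is connected so that all $F_{(t,x)}$, $t\in T$, are mutually $L$-homeomorphic, I may assume that there is a single slice $S$, a single model fibre $M=K\times^L(S\times F_0)$ with $F_0=F_{(t_0,x)}$ and diagonal $L$-action, and finitely many open sets $T_1,\dots,T_m$ covering $T$ such that $E\vert_{T_j\times KS}$ is, over $T_j$, $K$-isomorphic to the product bundle $T_j\times M\to T_j\times(K\times^L S)$, holomorphically over slices. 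Taking a Lebesgue number $\delta$ for the cover $(T_j)$ and subdividing each factor $[0,1]$ into intervals shorter than $\delta$, I obtain a grid decomposition $T=\bigcup_{\mathbf i}Q_{\mathbf i}$ into closed subcubes, each contained in some $T_j$, hence each carrying such a product structure.

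The gluing is then carried out by the following elementary step, repeated. Suppose $A,B\subset T$ are closed sub-boxes with $A\cup B$ a box and $A\cap B$ a common face perpendicular to the $\ell$-th coordinate, and $\psi_A\colon E\vert_{A\times KS}\to A\times M$, $\psi_B\colon E\vert_{B\times KS}\to B\times M$ are $K$-isomorphisms of bundles over $A$, resp.\ $B$, holomorphic over slices (after shrinking $S$ to one common to both). Over $A\cap B$ the composite $\theta:=\psi_B\circ\psi_A^{-1}$ is a $K$-equivariant automorphism of $M$ over $A\cap B$, holomorphic over each $\{t'\}\times KS$. Writing $A=(A\cap B)\times J$ with $J$ an interval in the $\ell$-th coordinate, I extend $\theta$ to an automorphism $\tilde\theta$ of $M$ over $A$ that is constant along $J$. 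Since $K$ acts trivially on $T$, constancy in a coordinate of $T$ preserves $K$-equivariance; and since $J$ is a parameter direction disjoint from the slice variables, $\tilde\theta$ is still holomorphic over each $\{t'\}\times KS$. Replacing $\psi_A$ by $\tilde\theta\circ\psi_A$ makes it agree with $\psi_B$ on $(A\cap B)\times KS$, so the two paste (legitimately, $A$ and $B$ being closed and meeting in the closed set $A\cap B$) to a product structure over $(A\cup B)\times KS$, holomorphic over slices. I apply this first to glue the subcubes in the $x_n$-direction into columns $P_{\mathbf i'}\times[0,1]$, then to glue these in the $x_{n-1}$-direction, and so on through all $n$ coordinates, shrinking $S$ finitely many more times so that one slice serves all the product structures involved at each stage. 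After the $n$-th round, $E\vert_{T\times KS}$ has a product structure over all of $T$, which, as we have only shrunk $S$ finitely many times, is exactly the assertion.

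I expect this to be essentially routine. The one point deserving care is the verification, just sketched, that an automorphism of the local model $M$ over a face extends, by constancy in the normal parameter direction, to an automorphism over the adjacent box while retaining both $K$-equivariance (safe because $K$ acts trivially on the parameter $T$) and holomorphy along the slices (safe because the extension direction lies in $T$, not in $X$); the remaining matters — the finitely many slice shrinkings, and the harmonisation of the various model fibres $F_{(t,x)}$ with $F_0$ — are bookkeeping.
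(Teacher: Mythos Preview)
Your proposal is correct and is exactly the approach the paper takes: the paper's one-line proof reads ``We use induction and apply the same argument as in the proof of Lemma \ref{lem:one} below,'' and your gluing step---extending the transition automorphism on a common face constantly in the normal parameter direction and modifying one of the two trivialisations---is precisely the argument of Lemma \ref{lem:one}, iterated one coordinate at a time over a grid subdivision of the cube. One small remark: the setting of Propositions \ref{prp:locally.trivial} and \ref{prp:cube} is purely topological (see the paragraph preceding Proposition \ref{prp:locally.trivial}), so your repeated tracking of ``holomorphy along slices'' is not required by the statement, though it does no harm.
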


\begin{proof} We use induction and apply the same argument as in the proof of Lemma \ref{lem:one} below.
\end{proof}

Propositions \ref{prp:locally.trivial} and \ref{prp:cube} now imply Theorem \ref{thm:local.triviality}.

\subsection{Proof of the homotopy invariance theorem (Theorem \ref{thm:homotopy.theorem})}
Let $K$ be a compact Lie group which acts continuously on the paracompact Hausdorff space $X$.  By the topological slice theorem, there is a slice at each point of $X$.  Let $\G$ be a  group bundle over $X$ with fibre $G$ and structure group $A$ such that $K$ acts on $\G$ by group $A$-bundle maps.  Recall the definition of equivariant local triviality from Section~\ref{sec:preliminaries}.  We assume that the $K$-action on $\G$ is equivariantly locally trivial. Let $K$ act trivially on $I$ and let $p\colon P\to X\times I$ be a principal $K$-$(\G\times I)$-bundle which is equivariantly locally trivial.  Of course, the action map $P\times_{X\times I}(\G\times I)\to P$ is $K$-equivariant.  Let $P_t$ denote the restriction of $P$ to $X\times\{t\}$, $t\in I$.  Then $P_t$ is naturally a principal $K$-$\G$-bundle.

\begin{theorem}   \label{thm:constantG}
Let $K$, $\G$, and $p\colon P\to X\times I$ be as above. Then the principal $K$-$(\G\times I)$-bundles $P$ and $P_1\times I$ are isomorphic. In particular, the principal $K$-$\G$-bundles $P_t$, $t\in I$, are mutually isomorphic.
\end{theorem}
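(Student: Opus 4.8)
The plan is to reduce Theorem \ref{thm:constantG} to a statement about extending local isomorphisms and then patching them together, in the spirit of the classical homotopy-invariance proof for principal bundles, but carried out $K$-equivariantly and fibrewise over $X$. First I would observe that it suffices to prove a local version: every point $x_0 \in X$ has a $K$-invariant neighbourhood $U$ over which $P$ is isomorphic (as a principal $K$-$(\G\times I)$-bundle) to $P_1\vert_U \times I$. Indeed, once such local isomorphisms exist, one chooses a $K$-invariant locally finite cover, orders the pieces, and glues: on overlaps two such isomorphisms differ by a continuous $K$-equivariant section of the structure group bundle $\Aut(\G\times I)$ restricted to the slice cross $I$, and one interpolates using a partition of unity subordinate to the cover (which may be taken $K$-invariant by averaging). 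The relevant interpolation lemma — patching isomorphisms parametrised by $I$ using a function on $X$ that is $0$ on one piece and $1$ on the others, with the $L$-action linear in the relevant coordinates — is exactly the kind of step referred to as ``the same argument as in the proof of Lemma \ref{lem:one}'' for Proposition \ref{prp:cube}, so I would set it up in that form.

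For the local statement, fix $x_0 \in X$ with stabiliser $L = K_{x_0}$ and a slice $S$ at $x_0$. By equivariant local triviality of both $\G$ (hypothesis) and $P$ (hypothesis), and by Proposition \ref{prp:locally.trivial} / Proposition \ref{prp:cube} applied with $T$ a point, after shrinking $S$ we may assume $P$ is $K$-homeomorphic over $KS \times I$ to $K\times^L\big((S\times I)\times G\big)$ with $L$ acting diagonally, and similarly for $\G$. Thus the whole problem descends to the slice: I must produce an $L$-equivariant isomorphism between the restriction of $P$ to $S\times I$ and $(P\vert_{S\times 1})\times I$, compatibly with the structure group bundle. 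Here $L$ is \emph{compact} and $S\times I$ is (after shrinking) an $L$-space that deformation retracts onto $S\times\{1\}$, so this is the genuinely classical situation: one builds the trivialising isomorphism by integrating along the $I$-direction. Concretely, the bundle of isomorphisms $\Iso(P\vert_{S\times I},\, (P\vert_{S\times 1})\times I) \to S\times I$ is a bundle with fibre $G$ and a contractible-in-the-$I$-direction base, and carries an $L$-action; one wants an $L$-section that is the identity over $S\times\{1\}$. I would construct this section over $S\times[t, 1]$ by downward induction on $t$ using local triviality over short intervals $[t_{i+1}, t_i]$ and the averaging/box-coordinate technique of Proposition \ref{prp:mixed.box.coordinates} to make the local connecting isomorphisms $L$-equivariant, then glue the finitely many interval pieces by the same partition-of-unity interpolation as above.

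The main obstacle I anticipate is \emph{equivariance of the gluing}, not existence of the local isomorphisms. Without a group action, patching bundle isomorphisms over a cover of $X\times I$ is routine; the difficulty is that the interpolation $\xi(x)\sigma_1 + (1-\xi(x))\sigma_2$ between two local isomorphisms only makes sense, and only stays inside the isomorphism bundle, when one works in coordinates where the relevant $L$- (or $K$-) action is linear — which is why one must pass to the box coordinates of Proposition \ref{prp:mixed.box.coordinates} on each overlap and arrange the cover so that on each overlap a single box coordinate system contains everything in sight, exactly as in the inductive argument sketched for Proposition \ref{prp:cube} and in the proof of the Proposition following the Claim in Section \ref{sec:proofs}. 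Handling the structure group bundle $\G$ rather than a fixed group $G$ adds bookkeeping but no essential new difficulty, since $\G$ has already been made constant (it is isomorphic to $\G_0\times I$ by the same argument, which is part (a) of Theorem \ref{thm:homotopy.theorem}). Once the local isomorphisms and the $K$-invariant interpolation lemma are in hand, the conclusion that the $P_t$ are mutually isomorphic is immediate: restrict the isomorphism $P \cong P_1\times I$ to $X\times\{t\}$.
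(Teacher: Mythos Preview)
Your local step is essentially the paper's: use equivariant local triviality to trivialise $P$ over $S\times I_t$ for short intervals $I_t$, shrink $S$, and glue along the $I$-direction using Lemma~\ref{lem:one} to get a product structure on each tube $(K\times^L S)\times I$. Fine.

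The gap is in the global step over $X$. You propose to take local isomorphisms $\sigma_i\colon P\vert_{U_i\times I}\to (P_1\vert_{U_i})\times I$ and ``interpolate using a partition of unity'' between them on overlaps. This does not work: on an overlap the difference $\sigma_i^{-1}\sigma_j$ is a section of the group bundle $\Aut(P_1\times I)$, whose fibre is $G$, and a convex combination $\xi\sigma_i+(1-\xi)\sigma_j$ has no meaning there. The box coordinates of Proposition~\ref{prp:mixed.box.coordinates} linearise the $L$-action on a \emph{fibre} near a fixed point; they do not turn $G$ into a vector space, and there is no reason the transition sections $\sigma_i^{-1}\sigma_j$ land in any such linear chart. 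What you are really trying to do is split a non-abelian $1$-cocycle with values in $\Aut(P_1\times I)$ over $X$, and a partition of unity does not do that.

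The paper's argument sidesteps this entirely with the classical ``max'' trick. Having trivialised $P$ over each $S_j\times I$ as $\G\vert_{S_j}\times I$, one chooses $K$-invariant functions $\rho_j$ supported in $U_j=KS_j$ with $\sup_j\rho_j\equiv 1$, defines on each piece the self-map $(g(x),t)\mapsto(g(x),\max\{\rho_j(x),t\})$ of $\G\vert_{S_j}\times I$, transports it to an automorphism $h_j$ of $P\vert_{U_j\times I}$, and then \emph{composes} the $h_j$ in a fixed total order on $J$. No interpolation between isomorphisms is needed; the $h_j$ simply push the $I$-coordinate up to $1$, and their composite is a well-defined $K$-equivariant map $P\to P_1$ giving $P\cong P_1\times I$. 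This is the missing idea.
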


We say that $P$ is isomorphic to a product on $X\times [a,b]$ if $P$ is isomorphic to $Q\times[a,b]$, where $Q$ is a principal $K$-$\G$-bundle on $X$.

\begin{lemma}   \label{lem:one}
Suppose that $P$ is isomorphic to a product on $X\times[0,\tfrac 1 2]$ and $X\times [\tfrac 1 2,1]$. Then $P$ is isomorphic to a product on $X\times I$.
\end{lemma}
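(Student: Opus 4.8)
The plan is the standard clutching argument. Given the two product structures, I would restrict both to the common slice $X\times\{\tfrac12\}$, use the resulting identification of the two factor bundles to adjust the product structure on the right-hand half so that it agrees with the one on the left-hand half along that slice, and then paste the two product structures into a single product structure on $X\times I$.

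Concretely, by hypothesis there are principal $K$-$\G$-bundles $Q_0$ and $Q_1$ on $X$ and $K$-equivariant isomorphisms $\phi_0\colon P\vert_{X\times[0,\frac12]}\overset{\sim}{\to} Q_0\times[0,\tfrac12]$ and $\phi_1\colon P\vert_{X\times[\frac12,1]}\overset{\sim}{\to} Q_1\times[\tfrac12,1]$ over the respective identity maps, intertwining the actions of $\G\times[0,\frac12]$ and $\G\times[\frac12,1]$. Restricting $\phi_0$ and $\phi_1$ to $X\times\{\tfrac12\}$ and identifying $P\vert_{X\times\{\frac12\}}$ with $P_{1/2}$, we obtain $K$-isomorphisms $\psi_0\colon P_{1/2}\to Q_0$ and $\psi_1\colon P_{1/2}\to Q_1$ of principal $K$-$\G$-bundles; set $\theta=\psi_1\circ\psi_0^{-1}\colon Q_0\to Q_1$, again a $K$-isomorphism of principal $K$-$\G$-bundles.

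Next I would define $\Phi\colon P\to Q_0\times I$ by letting $\Phi=\phi_0$ on $P\vert_{X\times[0,\frac12]}$ and $\Phi=(\theta\times\id_{[1/2,1]})^{-1}\circ\phi_1$ on $P\vert_{X\times[\frac12,1]}$. A direct computation shows that both formulas restrict to $\psi_0$ over the slice $X\times\{\tfrac12\}$, which is precisely what forces the above choice of $\theta$. Since $X\times[0,\tfrac12]$ and $X\times[\tfrac12,1]$ are closed in $X\times I$ with union $X\times I$, the pasting lemma shows that $\Phi$ is continuous, and pasting $\phi_0^{-1}$ with $\phi_1^{-1}\circ(\theta\times\id_{[1/2,1]})$ in the same way shows that $\Phi^{-1}$ is continuous. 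Both pieces are $K$-equivariant and intertwine the $\G\times I$-actions (recall that $K$ acts trivially on $I$), hence so does $\Phi$; and $\Phi$ lies over $\id_{X\times I}$. Therefore $\Phi$ is an isomorphism of principal $K$-$(\G\times I)$-bundles, so $P$ is isomorphic to the product $Q_0\times I$ on $X\times I$, as desired.

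There is no real obstacle here: the only points needing care are keeping track of the directions of the various isomorphisms, so that the two halves of $\Phi$ genuinely agree on the slice, and checking that the pasted map and its inverse are continuous. This elementary gluing step is the one that then gets iterated: in the proof of Theorem \ref{thm:constantG} (and, by the same inductive argument, in Proposition \ref{prp:cube}) one first reduces, by a compactness argument on $[0,1]$ together with equivariant local triviality, to product structures over finitely many closed subintervals, and then repeatedly applies the present lemma to glue them.
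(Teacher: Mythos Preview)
Your argument is correct and is exactly the approach taken in the paper: obtain the $K$-isomorphism $\theta\colon Q_0\to Q_1$ by comparing the two product structures along $X\times\{\tfrac12\}$, compose $\phi_1$ with $(\theta\times\id)^{-1}$ so that the two halves agree on the slice, and glue. The paper's proof says the same thing in two sentences; your version simply spells out the verification of agreement on the overlap and the continuity of the pasted map.
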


\begin{proof}
Let $Q$ and $Q'$ be the bundles on $X$ corresponding to the two product structures.  Then $Q$ and $Q'$ are isomorphic via a $K$-equivariant isomorphism $\phi$ of principal $K$-$\G$-bundles.  Changing the trivialisation of $P$ on $X\times[\tfrac 1 2,1]$ by $\phi\inv$ we can glue the two product structures to give a product structure for $P$ on $X\times [0,1]$.
\end{proof}

\begin{proof}[Proof of Theorem \ref{thm:constantG}]
Let $(x,t)\in X\times I$.  Then a slice at $(x,t)$ has the form $S_t\times I_t$ where $S_t\subset X$ is an $H=K_x$-stable neighbourhood of $x$ and $I_t$ is a neighbourhood of $t$ in $I$.  Since $P$ is equivariantly locally trivial, we may assume  that $P\vert _{S_t\times I_t}\simeq P\vert _{S_t\times\{t\}}\times I_t$.  It follows that $P\vert _{U_t\times I_t}\simeq P\vert _{U_t\times\{t\}}\times I_t$, where $U_t=K\times^HS_t$.  Now we can cover $I$ by finitely many $I_{t_j}$ such that $P$ is isomorphic to a product on $S\times I_{t_j}$ for all $j$, where $S=\bigcap S_{t_j}$.  Using Lemma \ref{lem:one}, we see that $P$ is isomorphic to a product on $(K\times^HS)\times I$.  We may also assume that $P\vert _{S\times I}\simeq \G\vert _S\times I$.

We can find a locally finite cover $(U_j)_{j\in J}$ of $X$ by $K$-invariant open sets of the form $K\times^{H_j}S_j$ such that we have $H_j$-isomorphisms $P\vert _{S_j\times I}\simeq\G\vert _{S_j}\times I$.  Let $\rho_j: X\to I$ be continuous and $K$-invariant with support in $U_j=KS_j$ and $\sup_j \rho_j(x)=1$ for all $x\in X$.  For each $j\in J$, define $f_j:\G(S_j)\times I\to \G(S_j)\times I$ by $f_j(g(x),t)=(g(x),\max\{\rho_j(x),t\})$.  Here $g(x)$ is in the fibre of $\G$ at $x$.  Then each $f_j$ induces an automorphism of $\G(S_j)\times I$, hence an automorphism of $P\vert _{S_j}\times I$ and an automorphism $h_j$ of $P\vert _{U_j\times I}$.
  
Pick a total ordering on $J$ and for each $y\in P$ define $h(y)\in P$ by composing, in order, the maps $h_j$ for the finitely many $j$ such that the first component of $p(y)$ lies in $U_j$.  Then $h:P\to P$ is continuous since the open cover $(U_j)$ is locally finite.  By construction, the image of $h$ lies in $p\inv(X\times\{1\})$ and $h$ gives the isomorphism of $P$ with $P_1\times I$.
\end{proof}

Now let $\G$ be a group $K$-bundle over $X\times I$.  Then $\G_t$ depends upon $t\in I$.  Using the same techniques as above one proves the following result.

\begin{theorem}   \label{thm:nonconstantG}
Let $\G$ be a group $K$-bundle over $X\times I$. Then $\G\simeq \G_1\times I$.
\end{theorem}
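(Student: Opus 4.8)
The plan is to follow the proof of Theorem \ref{thm:constantG} essentially line by line, with $\G$ now playing the role that was there played by the fixed structure group bundle over $X$ and by the principal bundle over $X\times I$ at once. All the tools used there---equivariant local triviality, the topological slice theorem, Lemma \ref{lem:one}, and the cutoff ``push-up'' construction---apply to a group $K$-bundle $\G$ over $X\times I$ without change, because $\G$ is in particular a $K$-bundle whose fibre is a complex manifold (namely $G$) and whose structure group $A$ lies in the biholomorphism group of that fibre, and a trivialisation of $\G$ as such a bundle automatically trivialises it as a group bundle (the transition maps take values in $A\subset\Aut G$, hence respect the fibrewise multiplication). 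As before we assume that the $K$-action on $\G$ is equivariantly locally trivial; when $X$ is Stein this is supplied by Theorem \ref{thm:local.triviality} applied with parameter space $I$.

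The first step is to localise in the $I$-direction. Fix $x\in X$ with $L=K_x$. Since $K$ acts trivially on $I$, a slice at $(x,t)\in X\times I$ has the form $S_t\times I_t$ with $S_t$ an $L$-invariant neighbourhood of $x$ in $X$ and $I_t$ a neighbourhood of $t$ in $I$; equivariant local triviality gives $\G\vert_{S_t\times I_t}\simeq\G\vert_{S_t\times\{t\}}\times I_t$, hence $\G\vert_{U_t\times I_t}\simeq\G\vert_{U_t\times\{t\}}\times I_t$ with $U_t=K\times^L S_t$. Covering the compact interval $I$ by finitely many $I_{t_j}$ and putting $S=\bigcap_j S_{t_j}$, the bundle $\G$ becomes a product on $(K\times^L S)\times J$ for each member $J$ of a suitable finite subdivision of $I$. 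The evident analogue of Lemma \ref{lem:one} for group $K$-bundles over $X\times I$ holds by the same two-line gluing argument, so iterating it yields a group $K$-bundle $Q$ over $K\times^L S$ with $\G\vert_{(K\times^L S)\times I}\simeq Q\times I$; restricting to $I$-coordinate $1$ identifies $Q$ with $\G_1\vert_{K\times^L S}$. Thus $\G$ is, over a $K$-invariant neighbourhood of each point of $X$, isomorphic to the corresponding restriction of $\G_1\times I$.

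The second step is to globalise by the cutoff construction of Theorem \ref{thm:constantG}. Choose a locally finite cover $(U_j=K\times^{H_j}S_j)_{j\in J}$ of $X$ by $K$-invariant open sets together with $H_j$-equivariant trivialisations $\G\vert_{S_j\times I}\simeq\G_1\vert_{S_j}\times I$, and continuous $K$-invariant functions $\rho_j\colon X\to I$ supported in $U_j$ with $\sup_j\rho_j\equiv 1$. For each $j$, the self-map $(g(x),t)\mapsto(g(x),\max\{\rho_j(x),t\})$ of $\G_1\vert_{S_j}\times I$ leaves the group-fibre coordinate fixed and covers $(x,t)\mapsto(x,\max\{\rho_j(x),t\})$ on $S_j\times I$; it therefore induces, through the chosen trivialisation and the $K$-action, a fibrewise group homomorphism $h_j$ of $\G\vert_{U_j\times I}$ over the (globally defined, $K$-invariant) base map $\max(\rho_j,\,\cdot\,)$, which extends by the identity over the complement of $U_j\times I$. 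Composing the $h_j$ in a fixed order---well defined because the cover is locally finite---produces a continuous $K$-equivariant map $h\colon\G\to\G$, fibrewise a group homomorphism, whose image lies in $\G\vert_{X\times\{1\}}=\G_1$ and which is the desired isomorphism $\G\simeq\G_1\times I$.

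The only step requiring real care---and the only respect in which this is more than rereading the proof of Theorem \ref{thm:constantG}---is the bookkeeping in the globalisation step: one must check that the local product structures can be chosen $H_j$-equivariantly (this is exactly equivariant local triviality), and that the maps $h_j$, and hence $h$, are simultaneously $K$-equivariant, continuous across the locally finite cover, and fibrewise multiplicative. All of this holds because $\rho_j$ is $K$-invariant and $h_j$ alters only the $I$-coordinate, so it is routine; but it is precisely what the phrase ``the same techniques'' is standing in for.
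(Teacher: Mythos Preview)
Your proposal is correct and takes essentially the same approach as the paper, which simply writes ``Using the same techniques as above one proves the following result.'' You have faithfully unpacked what ``the same techniques'' means: equivariant local triviality to get local product structures along $I$, the gluing argument of Lemma~\ref{lem:one} adapted to group $K$-bundles, and the cutoff push-up construction from the proof of Theorem~\ref{thm:constantG}, noting along the way that since the structure group $A$ sits inside $\Aut G$, a local trivialisation as an $A$-bundle is automatically a trivialisation as a group bundle and the maps $h_j$ are fibrewise group isomorphisms.
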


Theorem \ref{thm:homotopy.theorem} is now a consequence of Theorems \ref{thm:local.triviality}, \ref{thm:constantG}, and \ref{thm:nonconstantG}.

\section{Equivariant isomorphisms}   \label{sec:equivar.isos}

\noindent
Theorem \ref{thm:main.result} can be used to strengthen one of the main results of our previous paper \cite{KLS}.  There, we considered generic Stein $G$-manifolds $X$ and $Y$ that are locally $G$-biholomorphic over a common categorical quotient $Q$, where $G$ is a reductive complex Lie group.  Genericity is defined in \cite[p.~194]{KLS}; as explained in \cite[Remark 5]{KLS}, generic actions really are generic in a reasonable sense.  We showed that the obstruction to $X$ and $Y$ being globally $G$-biholomorphic over $Q$ is topological and established several sufficient conditions for it to vanish.

We defined a $G$-homeomorphism $X\to X$ to be \textit{special} if it is of the form $x\mapsto \gamma(x)\cdot x$ for some continuous $G$-map $\gamma:X\to G$, where $G$ acts on the target $G$ by conjugation; $\gamma$ is then unique.  We called a $G$-homeomorphism $\psi:X\to Y$ \textit{special} if for some open cover $(U_i)$ of $Q$ and $G$-biholomorphisms $\phi_i:\pi_X^{-1}(U_i)\to\pi_Y^{-1}(U_i)$ over $U_i$ (meaning that they descend to the identity map of $U_i$), the $G$-homeomorphisms $\phi_i^{-1}\circ \psi$ of $\pi_X^{-1}(U_i)$ are special.  Here, $\pi_X:X\to Q$ and $\pi_Y:Y\to Q$ are the quotient maps.  Genericity implies that every $G$-biholomorphism $X\to Y$ over $Q$ is special.  In fact, for each open subset $U$ of $Q$, there is a bijective correspondence between $G$-biholomorphisms $\psi$ of $\pi_X^{-1}(U)$ over $U$ and holomorphic $G$-maps $\gamma:\pi_X^{-1}(U)\to G$, where $G$ acts on the target $G$ by conjugation, given by $\psi(x)=\gamma(x)\cdot x$ \cite[Lemma 6]{KLS}.  The definition of a special $K$-homeomorphism $X\to Y$, where $K$ is a maximal compact subgroup of $G$, is evident.

We defined \textit{strong} $G$-homeomorphisms in \cite[p.~210]{KLS}.  They are holomorphic and hence algebraic on each fibre.  The full definition is somewhat involved, so we shall not recall it, but it is natural, whereas special $G$-homeo\-morphisms only play an auxiliary role.  One of our Oka principles for equivariant isomorphisms \cite[Theorem 22]{KLS} states that every strong $G$-homeomorphism $X\to Y$ can be deformed, through strong $G$-homeomorphisms, to a special strong $G$-homeomorphism.  We then argued that the existence of a special $G$-homeomorphism, or merely a special $K$-homeomorphism, implies the existence of a $G$-biholomorphism, without establishing that the former can be deformed to the latter.

Let $\G$ be the holomorphic group $G$-bundle $X\times G$ over $X$, with $G$ acting on the fibre $G$ by conjugation.  Continuous $K$-sections and holomorphic $G$-sections of $\G$ correspond to special $K$-homeomorphisms and $G$-biholomorphisms $X\to X$ over $Q$, respectively.  Special $K$-homeomorphisms and $G$-biholomorphisms $X\to Y$ over $Q$ correspond to continuous $K$-sections and holomorphic $G$-sections, respectively, of a certain principal $G$-$\G$-bundle that is locally isomorphic to $\G$ over $Q$.  Theorem \ref{thm:main.result} now yields the following result.

\begin{theorem}   \label{thm:old.work.strengthened}
Let $G$ be a reductive complex Lie group.  Let $K$ be a maximal compact subgroup of $G$.  Let $X$ and $Y$ be generic Stein $G$-manifolds, locally $G$-biholomorphic over a common quotient $Q$.  Every strong $G$-homeomorphism $X\to Y$ over $Q$ can be deformed, through $K$-homeomorphisms over $Q$, to a $G$-biholomorphism.  
\end{theorem}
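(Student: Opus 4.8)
The plan is to combine the deformation result \cite[Theorem 22]{KLS} with the principal-bundle case of the equivariant parametric Oka principle, Theorem \ref{thm:main.result}, via the dictionary between homeomorphisms over $Q$ and bundle sections recalled above. First, let $\psi\colon X\to Y$ be a strong $G$-homeomorphism over $Q$. By \cite[Theorem 22]{KLS}, $\psi$ can be deformed, through strong $G$-homeomorphisms, to a special strong $G$-homeomorphism $\psi'$. Every strong $G$-homeomorphism over $Q$ is in particular a $K$-homeomorphism over $Q$, so this is already a deformation of the type permitted in the statement, and it remains only to deform $\psi'$, through $K$-homeomorphisms over $Q$, to a $G$-biholomorphism.

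Being special, $\psi'$ is a special $K$-homeomorphism $X\to Y$ over $Q$, and so corresponds to a continuous $K$-section $s$ of the principal $G$-$\G$-bundle $P$ described above --- the one whose continuous $K$-sections are precisely the special $K$-homeomorphisms $X\to Y$ over $Q$ and whose holomorphic $G$-sections are precisely the $G$-biholomorphisms $X\to Y$ over $Q$. Since $G=K^\C$ acts on the Stein manifold $X$ by biholomorphisms and on the group bundle $\G=X\times G$ by conjugation on the fibre, $\G$ is a holomorphic group $K^\C$-bundle and $P$ is a holomorphic principal $K^\C$-$\G$-bundle (recall $K^\C=G$). Hence Theorem \ref{thm:main.result} applies to $P$; as explained in Remark \ref{rem:simplification}, the principal-bundle case is comparatively easy, and in any event the only consequence we need --- that the inclusion of the space of holomorphic $G$-sections of $P$ into the space of continuous $K$-sections induces a surjection of path components --- is already the first part of the proof of Theorem \ref{thm:main.result}. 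Therefore $s$ is joined by a path of continuous $K$-sections of $P$ to a holomorphic $G$-section $s'$. Running this path back through the dictionary produces a deformation, through special $K$-homeomorphisms (hence $K$-homeomorphisms) over $Q$, from $\psi'$ to the $G$-biholomorphism corresponding to $s'$. Concatenating with the first deformation yields the desired deformation of $\psi$, through $K$-homeomorphisms over $Q$, to a $G$-biholomorphism.

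The genuine content of this argument is contained in Theorem \ref{thm:main.result} and \cite[Theorem 22]{KLS}; what is left is bookkeeping, and that bookkeeping is where the only real care is needed, so I regard it as the main obstacle. Concretely, one must make precise the principal $G$-$\G$-bundle $P$ together with its structure as a holomorphic principal $K^\C$-$\G$-bundle, so that Theorem \ref{thm:main.result} applies to it verbatim, and one must verify that the correspondences between special $K$-homeomorphisms $X\to Y$ over $Q$ and continuous $K$-sections of $P$, and between $G$-biholomorphisms $X\to Y$ over $Q$ and holomorphic $G$-sections of $P$, are homeomorphisms for the compact-open topologies; this is exactly what lets a path of sections be translated into a deformation of homeomorphisms and conversely. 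One also checks the trivial points that the endpoint $\psi'$ of the first deformation corresponds to the section $s$ that begins the second, so that the concatenation is a continuous family, and that the property of lying over $Q$ is preserved throughout. Given the constructions and lemmas of \cite{KLS}, these verifications are routine, so I anticipate no serious difficulty beyond assembling them.
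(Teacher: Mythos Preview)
Your proposal is correct and follows the same route as the paper: first invoke \cite[Theorem 22]{KLS} to deform the given strong $G$-homeomorphism through strong $G$-homeomorphisms to a special one, then translate special $K$-homeomorphisms and $G$-biholomorphisms over $Q$ into continuous $K$-sections and holomorphic $G$-sections of the associated principal $K^\C$-$\G$-bundle and apply the $\pi_0$-surjectivity from Theorem \ref{thm:main.result}. The paper likewise notes that the deformation begins through strong $G$-homeomorphisms and continues through special $K$-homeomorphisms, matching your concatenation.
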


The deformation starts through strong $G$-homeomorphisms and continues through special $K$-homeomorphisms.  It is reasonable to conjecture that the inclusion of the space of $G$-biholomorphisms into the space of strong $G$-homeomorphisms is a weak homotopy equivalence.  We leave the consideration of this question for another day.

\bibliographystyle{amsalpha}
\bibliography{Oka.paperbib}

\end{document}